\documentclass[12pt]{amsart}
\usepackage[T2A,T1]{fontenc}
\usepackage[utf8]{inputenc}		
\usepackage{ucs}
\usepackage[russian,english]{babel}	
\usepackage{amssymb}
\usepackage{ dsfont }
\usepackage{multirow}
\usepackage{amsthm}
\usepackage{tikz}
\usepackage{amsmath}
\usepackage{graphicx}
\usepackage{wrapfig}
\usepackage{caption}
\usepackage[left=2.5cm,right=2.5cm,top=2.4cm,bottom=2.4cm,bindingoffset=0cm]{geometry}
\usepackage{enumerate}
\usepackage{makecell}	
\newtheorem{state}{Statement}[section]

\newtheorem*{theorem}{Main Theorem}

\newtheorem{lemm}[state]{Lemma}
\newtheorem{lemma}[state]{Lemma}

\makeatletter
\newcommand*{\rom}[1]{\expandafter\@slowromancap\romannumeral #1@}
\makeatother

\makeatletter
\def\@seccntformat#1{\csname the#1\endcsname. }
\def\@biblabel#1{#1.}

\makeatother

\title[On characterization of groups by isomorphism type of Gruenberg--Kegel graph]{On characterization of groups by isomorphism type of Gruenberg--Kegel graph}

\author{Mingzhu Chen}
\address{Mingzhu Chen \newline School of Mathematics and Statistics, Hainan University,\newline
Haikou, 570225, Hainan, P. R. China}
\email{mzchen@hainanu.edu.cn}

\author{Natalia~V.~Maslova}
\address{Natalia Vladimirovna Maslova \newline Krasovskii Institute of Mathematics and Mechanics UB RAS,\newline
16, S. Kovalevskaja str.,
Yekaterinburg, 620108, Russia\newline
Ural Federal University,\newline
19, Mira str., Yekaterinburg, 620002, Russia \newline
ORCID: 0000-0001-6574-5335}
\email{butterson@mail.ru}

\author{Marianna~R.~Zinov'eva}
\address{Marianna Rifhatovna Zinov'eva \newline Krasovskii Institute of Mathematics and Mechanics UB RAS,\newline
16, S. Kovalevskaja str.,
Yekaterinburg, 620108, Russia\newline
Ural Federal University,\newline
19, Mira str., Yekaterinburg, 620002, Russia \newline
ORCID: 0009-0006-0560-2192}
\email{zinovieva-mr@yandex.ru}

\begin{document}

\begin{abstract}
The Gruenberg--Kegel graph (or the prime graph) $\Gamma(G)$ of a finite group $G$ is the graph whose vertex set is the set of prime divisors of $|G|$ and in which two distinct vertices $r$ and $s$ are adjacent if and only if there exists an element of order $rs$ in $G$. We say that a group $G$ is recognizable by Gruenberg--Kegel graph if for every group $H$ the equality $\Gamma(H)=\Gamma(G)$ implies that $G\cong H$. A group $G$ is recognizable by isomorphism type of Gruenberg--Kegel graph if for every group $H$ the isomorphism between $\Gamma(H)$ and $\Gamma(G)$ as abstract graphs (i.\,e. unlabeled graphs) implies that $G\cong H$. In 2022, P.~J.~Cameron and the second author proved that if a finite group is recognizable by Gruenberg--Kegel graph, then the group is almost simple. It is clear that if a group is recognizable by isomorphism type of Gruenberg--Kegel graph, then the group is recognizable by Gruenberg--Kegel graph.  There are still not so many examples of groups which can be recognizable by isomorphism type of Gruenberg--Kegel graph. In 2006, A.~V.~Zavarnitsine proved that finite simple sporadic group $J_4$ is the unique finite group with exactly $6$ connected components of its Gruenberg--Kegel graph. In 2022, P.~J.~Cameron and the second author proved that the groups $E_8(2)$ and ${^2}G_2(27)$ are recognizable by isomorphism type of Gruenberg--Kegel graph, and recently M.~Lee and T.~Popiel proved that a sporadic simple group $S$ is recognizable by isomorphism type of Gruenberg--Kegel graph if and only if $S \in \{\mathbb{B}, Fi_{23}, Fi'_{24}, J_4, Ly, \mathbb{M}, O’N, Th\}$. In this paper, we prove that finite simple exceptional groups of Lie type ${^2}E_6(2)$ and $E_8(q)$ for $q \in \{3, 4, 5, 7, 8, 9, 17\}$ are recognizable by isomorphism type of Gruenberg--Kegel graph.

\medskip

Keywords: finite group, simple group, exceptional group of Lie type, Gruenberg--Kegel graph (prime graph), recognition by isomorphism type of Gruenberg--Kegel graph.

\medskip

MSC classes: 20D60, 20D06, 05C99.

\medskip

\hfill Dedicated to Cheryl~E. Praeger on the occasion of her anniversary

\end{abstract}

\maketitle

Throughout the paper we consider only finite groups and simple graphs, and henceforth the term group means finite group and the term graph means simple graph, that is undirected graph without loops and multiple edges.

Let $G$ be a group. The {\it spectrum} $\omega(G)$ is the set of all element orders of $G$. The {\it prime spectrum} $\pi(G)$ is the set of all primes belonging to $\omega(G)$. A graph $\Gamma(G)$ whose vertex set is $\pi(G)$ and in which two distinct vertices~$r$ and~$s$ are adjacent if and only if $rs \in \omega(G)$ is called the {\it Gruenberg--Kegel graph} or the {\it prime graph} of~$G$.

Let $G$ and $H$ be groups. We write $\Gamma(G)=\Gamma(H)$ if $\pi(G)=\pi(H)$ and two distinct primes $p$ and $q$ are adjacent in $\Gamma(G)$ if and only if they are adjacent in $\Gamma(H)$. We write $\Gamma(G) \cong \Gamma(H)$ if $\Gamma(G)$ and $\Gamma(H)$ are isomorphic as abstract graphs (i.\,e. unlabeled graphs).

\medskip

\textbf{Example} {\rm (see \cite[P.~188]{CaMas})}. For groups $G$ and $H$,

\begin{itemize}

\item if $G \cong H$, then $\omega(G)=\omega(H)${\rm;}

\item if $\omega(G)=\omega(H)$, then $\Gamma(G)=\Gamma(H)${\rm;}

\item if $\Gamma(G)=\Gamma(H)$, then $\pi(G)=\pi(H)$ and $\Gamma(G) \cong \Gamma(H)$.

\end{itemize}

The converse does not hold in each case, as the following series of examples demonstrates{\rm:}

\begin{itemize}

\item $S_5\not \cong S_6$ but $\omega(S_5)=\omega(S_6)${\rm;}

\item $\omega(A_5)\not =\omega(A_6)$ but $\Gamma(A_5)=\Gamma(A_6)${\rm;}

\item $\Gamma(A_{10})\not =\Gamma(Aut(J_2))$ but  $\Gamma(A_{10})$ and $\Gamma(Aut(J_2))$ are isomorphic as abstract graphs  and $\pi(A_{10}) =\pi(Aut(J_2))$, see Figure~1.

    \begin{center}
    \begin{tikzpicture}
    \tikzstyle{every node}=[draw,circle,fill=white,minimum size=4pt,
                            inner sep=0pt]
    \draw (0,0) node (2) [label=left:$2$]{}
        -- ++ (-30:1cm) node (3) [label=below:$3$]{}
        -- ++ (0:1cm) node (7) [label=below:$7$]{}
          (-90:1.0cm) node (5) [label=left:$5$]{};
    \draw (5) -- (3);
    \draw (5) -- (2);
    \end{tikzpicture}{$\Gamma(A_{10})$;} $\mbox{ }$$\mbox{ }$$\mbox{ }$$\mbox{ }$$\mbox{ }$$\mbox{ }$$\mbox{ }$$\mbox{ }$
    \begin{tikzpicture}
    \tikzstyle{every node}=[draw,circle,fill=white,minimum size=4pt,
                            inner sep=0pt]
    \draw (0,0) node (2) [label=left:$3$]{}
        -- ++ (-30:1cm) node (3) [label=below:$2$]{}
        -- ++ (0:1cm) node (7) [label=below:$7$]{}
          (-90:1.0cm) node (5) [label=left:$5$]{};
    \draw (5) -- (3);
    \draw (5) -- (2);
    \end{tikzpicture}{$\Gamma(Aut(J_2))$.}\\
    Figure~1
\end{center}

\smallskip

\end{itemize}

\noindent We say that the group $G$ is
\begin{itemize}
\item{\it recognizable} by its spectrum (Gruenberg--Kegel graph, respectively) if for each group~$H$, $\omega(G)=\omega(H)$ ($\Gamma(G)=\Gamma(H)$, respectively) if and only if $G \cong H${\rm;}
\item  {\it $k$-recognizable} by spectrum (Gruenberg--Kegel graph, respectively), where $k$ is a positive integer number, if there are exactly $k$ pairwise non-isomorphic groups with the same spectrum (Gruenberg--Kegel graph, respectively) as $G${\rm;}
\item  {\it almost recognizable} by spectrum (Gruenberg--Kegel graph, respectively) if it is $k$-recognizable by spectrum (Gruenberg--Kegel graph, respectively) for some positive integer number $k${\rm;}
\item {\it unrecognizable} by spectrum (Gruenberg--Kegel graph, respectively), if there are infinitely many pairwise non-isomorphic groups with the same spectrum (Gruenberg--Kegel graph, respectively) as $G$.
\end{itemize}

\medskip

The problem of characterization of a finite group by spectrum is well-known and was widely researched. There are strong and nice results in this research area; in particular, 'almost all' finite simple groups are almost recognizable by spectrum, and each finite simple group is uniquely determined up to isomorphism by its spectrum and order. A survey of this research area can be found in~\cite{GreMazShiVasYang} with updates in~\cite[Section~2]{Maslova_surv}. It is clear that if a finite group is (almost) recognizable by Gruenberg--Kegel graph, then this group is (almost) recognizable by spectrum (the converse does not hold in the general case) and the spectrum gives much more information about a group than its Grunberg--Kegel graph. On the other hand, each group which is almost recognizable by spectrum is almost simple~\cite[Theorem~1.3]{CaMas} while almost simple groups form an important class of finite groups~\cite[Section~3]{Maslova_surv}; a survey of recent progress in characterization of a finite group by Gruenberg--Kegel graph can be found in~\cite{CaMas} and~\cite[Section~3]{Maslova_surv}.

\smallskip

It turns out that sometimes when we discuss the question of characterization of a group by Gruenberg--Kegel graph we can 'forget' about labels of the vertices of Grueberg--Kegel graph. We say that the group $G$ is
\begin{itemize}
\item {\it recognizable by isomorphism type of Gruenberg--Kegel graph} if for each group~$H$, $\Gamma(G)\cong \Gamma(H)$ if and only if $G \cong H${\rm;}
\item  {\it $k$-recognizable by isomorphism type of Gruenberg--Kegel graph}, where $k$ is a positive integer number, if there are exactly $k$ pairwise non-isomorphic groups $H$ with $\Gamma(H) \cong \Gamma(G)${\rm;}
\item  {\it almost recognizable by isomorphism type of Gruenberg--Kegel graph} if it is $k$-recognizable by isomorphism type of Gruenberg--Kegel graph for some positive integer number $k${\rm;}
\item {\it unrecognizable by isomorphism type of Gruenberg--Kegel graph} if there are infinitely many pairwise non-isomorphic groups $H$ with $\Gamma(H) \cong \Gamma(G)$.
\end{itemize}

\medskip

By~\cite[Theorem~1.3]{CaMas}, if a group $G$ is recognizable by isomorphism type of Gruenberg--Kegel graph, then $G$ is almost simple.  There are not so many results about characterization of a group by isomorphism type of Gruenberg--Kegel graph. In~2006, A.~V.~Zavarnitsine has proved that sporadic simple group $J_4$ is recognizable by isomorphism type of Gruenberg--Kegel graph~\cite[Theorem~B]{Zavarnitsine_2006}. Recently M.~Lee and T.~Popiel~\cite{LeePopiel_2} proved that a sporadic simple group $S$ is recognizable by isomorphism type of Gruenberg--Kegel graph if and only if $S \in \{\mathbb{B}, Fi_{23}, Fi'_{24}, J_4, Ly, \mathbb{M}, O’N, Th\}$; all other sporadic simple groups are unrecognizable by isomorphism type of Gruenberg--Kegel graph. In 2013, A.~V.~Zavarnitsine~\cite{Zavarnitsine_2013} proved that if $S$ is a group such that $\Gamma(S)$ has exactly $5$ connected components, then $S\cong E_8(q)$, where $q \equiv 0, \pm 1 \pmod{5}$. In 2022, P.~J.~Cameron and the second author~\cite[Theorem~1.5]{CaMas} proved that the groups $E_8(2)$ and ${^2}G_2(27)$ are recognizable by isomorphism type of Gruenberg--Kegel graph.

\medskip

In this paper we investigate the question of characterization by isomorphism type of Gruenberg--Kegel graph of groups ${^2}E_6(2)$ and $E_8(q)$ for $q \in \{3, 4, 5, 7, 8, 9, 17\}$. In 2021, A.~S.~Kondrat'ev~\cite{Kondrat'ev2E6(2)} proved that ${^2}E_6(2)$ is recognizable by Gruenebrg--Kegel graph, and recently the second author, V.~V.~Panshin and A.~M.~Staroletov~\cite[Theorem~6.1]{MasPansStar} continuing the research~\cite{Zavarnitsine_2013} proved that if $\Gamma(S)=\Gamma(E_8(q))$, where $q \equiv \pm 2\pmod{5}$, then $S \cong E_8(u)$ for some prime power $u$ with $u \equiv \pm 2\pmod{5}$. Thus, each group $S=E_8(q)$ is almost recognizable by Gruenberg--Kegel graph by~\cite[Proposition~3.1]{CaMas}, however, even the question whenever $S$ is recognizable by Gruenberg--Kegel graph is still open in the general case, see~\cite[Problem~6.3]{MasPansStar}.

\medskip

We prove the following theorem.

\medskip

\begin{theorem}\label{Main} $(i)$ Let $G$ be a group such that $\Gamma(G)$ is isomorphic to the following graph

\begin{center}    \begin{tikzpicture}
        \tikzstyle{every node}=[draw,circle,fill=white,minimum size=4pt, inner sep=0pt]
        \draw (0,0) node (3)  {}
        (-3.2cm:-1.7cm) node (2)  {}
        (0.0cm:-2.0cm) node (5)  {}
        (-1.4cm:-2.65cm) node (7)  {}
        (1.0cm:1.5cm) node (11)  {}

        (6.0cm:4.0cm) node (17)  {}
        (0.0cm:-3.9cm) node (19)  {}
        (-7.0cm:4.15cm) node (13)  {}
 (3)--(11)
 (2)--(11)
 (2)--(3)
 (2)--(5)
 (2)--(7)
 (3)--(5)
 (3)--(7)
 (5)--(7)
 ;
    \end{tikzpicture}\\
    \end{center}
Then $G \cong {^2}E_6(2)$.

$(ii)$ Let $G=E_8(q)$ for $q \in \{3, 4, 5, 7, 8, 9, 17\}$. If $H$ is a group such that $\Gamma(H) \cong \Gamma(G)$, then $H \cong G$.

\smallskip

On the other words, the groups ${^2}E_6(2)$ and $G=E_8(q)$ for $q \in \{3, 4, 5, 7, 8, 9, 17\}$  are recognizable by isomorphism type of Gruenberg--Kegel graph.

\end{theorem}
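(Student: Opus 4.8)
The plan is to handle both parts by the same reduction: for a group $H$ whose Gruenberg--Kegel graph is isomorphic to that of the target group, use the Gruenberg--Kegel theorem on groups with disconnected prime graph to extract a nonabelian simple section, identify that section from the classification of finite simple groups whose prime graph has at least four connected components, and finish using the already established (labelled) recognizability results. For part $(i)$ one first reads off from the spectrum of ${}^2E_6(2)$ (equivalently, from~\cite{Kondrat'ev2E6(2)}) that the displayed graph $\Gamma$ is $\Gamma({}^2E_6(2))$: it has eight vertices and four connected components, three of them isolated vertices (labelled $13,17,19$) and the fourth a graph on $\{2,3,5,7,11\}$ consisting of the clique on $\{2,3,5,7\}$ together with the vertex $11$ joined only to $2$ and to $3$. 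So let $H$ satisfy $\Gamma(H)\cong\Gamma$; then $\Gamma(H)$ has four connected components, and since a Frobenius or $2$-Frobenius group has exactly two, the Gruenberg--Kegel theorem yields a normal series $1\unlhd N\unlhd M\unlhd H$ with $N$ nilpotent, $S:=M/N$ nonabelian simple, $H/M\hookrightarrow\operatorname{Out}(S)$, all primes dividing $|N|$ or $|H/M|$ lying in the connected component $\pi_1$ of $\Gamma(H)$ containing $2$, and every connected component of $\Gamma(H)$ other than $\pi_1$ being a connected component of $\Gamma(S)$. Since $2\in\pi(S)$ lies in a component of $\Gamma(S)$ distinct from those latter ones, $s(\Gamma(S))\ge 4$; comparing component sizes shows $\Gamma(S)$ has at least three isolated vertices; and $|\pi(S)|\le|\pi(H)|=8$. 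I would also use the elementary fact that every edge of $\Gamma(M/N)$ is an edge of $\Gamma(M)$ --- lift an element of order $rs$ from $M/N$ to $x\in M$ and pass to a suitable power of $x$ --- so that $\Gamma(S)\subseteq\Gamma(H)$.

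Next I would go through the (corrected) Williams--Kondrat'ev classification of finite simple groups with at least four prime graph components, together with Zavarnitsine's determination of those with exactly five. The groups with more than eight prime divisors --- $J_4$ and the groups $E_8(q)$ with $q\equiv 0,\pm1\pmod{5}$ --- are discarded at once. To bound the effect of $N$: if a prime $p$ divides $|N|$ but not $|S|$ then $p$ is odd, the Sylow $p$-subgroup of $N$ is normal in $H$, and --- as $S$ is perfect and generated by involutions, so cannot act on a nontrivial module in odd characteristic with every involution inverting --- some involution of $S$ has a nontrivial fixed point there, whence $p$ is joined to $2$ in $\Gamma(H)$; moreover any two primes dividing $|N|$ are joined (since $N$ is nilpotent). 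Hence $2$ together with the primes of $\pi(N)\setminus\pi(S)$ forms a clique inside $\pi_1$, and since the largest clique in the $5$-vertex component of $\Gamma$ is a $K_4$, there are at most three such primes; together with the smallness of $\operatorname{Out}(S)$ this forces $|\pi(S)|$ to be fairly large, leaving only ${}^2E_6(2)$, the Suzuki groups ${}^2B_2(2^{2k+1})$, $J_1$ and $M_{22}$ as candidates. A Suzuki group is excluded because a new prime could only come from a field automorphism, whose centralizer contains a subfield subgroup ${}^2B_2(2^m)$; the odd primes of that subgroup are then forced to be among the isolated vertices of $\Gamma(H)$, yet each is joined in $\Gamma(H)$ to the field-automorphism prime --- a contradiction. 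For $J_1$ and $M_{22}$ the centralizers of elements of order $3$ (and of order $5$, for $J_1$) are so small that column orthogonality forbids any module on which such an element acts without fixed points; so every prime of $\pi(N)\setminus\pi(S)$ is joined to $3$ (and to $5$) as well as to $2$, which would make the $5$-vertex component a $K_5$ rather than a $K_4$ with a pendant vertex. Hence $S\cong{}^2E_6(2)$; then $\pi(H)=\pi({}^2E_6(2))$, the Gruenberg--Kegel structure forces the three isolated vertices of $\Gamma(H)$ to be $13,17,19$, and since $\Gamma(S)\subseteq\Gamma(H)$ while both induce an $8$-edge graph on $\{2,3,5,7,11\}$ we get $\Gamma(H)=\Gamma({}^2E_6(2))$, so $H\cong{}^2E_6(2)$ by~\cite{Kondrat'ev2E6(2)}.

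Part $(ii)$ splits the listed $q$ according to $q\bmod 5$. If $q\in\{4,5,9\}$ then $q\equiv 0,\pm1\pmod{5}$ and $\Gamma(E_8(q))$ has exactly five connected components; hence any $H$ with $\Gamma(H)\cong\Gamma(E_8(q))$ also has exactly five, so $H\cong E_8(u)$ for some prime power $u\equiv 0,\pm1\pmod{5}$ by Zavarnitsine's theorem~\cite{Zavarnitsine_2013}. If $q\in\{3,7,8,17\}$ then $q\equiv\pm2\pmod{5}$ and $\Gamma(E_8(q))$ has exactly four components; one runs the Gruenberg--Kegel reduction as in part $(i)$ and, invoking~\cite[Theorem~6.1]{MasPansStar} and the techniques behind it, concludes that the simple section $S$ is $E_8(u)$ with $u\equiv\pm2\pmod{5}$, and then --- using $\Gamma(S)\subseteq\Gamma(H)$, the matching of component sizes, and the recognizability of $E_8(q)$ by its Gruenberg--Kegel graph established for these $q$ in the process --- that $H\cong E_8(u)$. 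In every case it remains to prove $u=q$: a graph isomorphism preserves the number of vertices $|\pi(E_8(u))|$, the number and sizes of the connected components, and the clique structure of the principal component, and for these particular values of $q$ one verifies by direct computation with the factorizations of the cyclotomic values $\Phi_d(u)$ dividing $|E_8(u)|$ that all of these invariants already force $u=q$; therefore $H\cong E_8(q)$.

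The step I expect to be the main obstacle is the identification of the simple section. In part $(i)$ this means assembling the full corrected classification of simple groups with exactly four prime graph components and then eliminating each surviving candidate, which brings in both representation-theoretic constraints on the possible action on $N$ and delicate arithmetic --- precisely how many distinct primes divide $q-1$, $q+1$, or a prescribed $\Phi_d(q)$. In part $(ii)$ with $q\equiv\pm2\pmod{5}$ it means pushing the analysis of~\cite{MasPansStar} from the equality $\Gamma(S)=\Gamma(E_8(q))$ to the weaker hypothesis $\Gamma(H)\cong\Gamma(E_8(q))$, and carrying out the cyclotomic computations that separate $E_8(q)$ for the different admissible values of $q$; the restriction to the listed $q$ is exactly the range in which these computations have been completed.
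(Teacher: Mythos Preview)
Your overall strategy matches the paper's, but the candidate list in part $(i)$ is incomplete: after bounding $|\pi(N)\setminus\pi(S)|\le 3$ you claim that only ${}^2E_6(2)$, ${}^2B_2(2^{2k+1})$, $J_1$, and $M_{22}$ survive, yet $Ly$ (with $|\pi|=8$ and component sizes $5,1,1,1$) and $O'N$ (with $|\pi|=7$ and sizes $4,1,1,1$) pass every filter you state, and you also need the bound $|\pi(E_8(q_1))|>8$ to dispose of the $E_8$ line. The two sporadic omissions require genuine extra work. For $Ly$ one has $|\pi(Ly)|=8$ and $\operatorname{Out}(Ly)=1$, so $\pi(H)=\pi(Ly)$; since $\Gamma(Ly)$ has only seven edges on $\pi_1=\{2,3,5,7,11\}$ while $\Gamma(H)$ has eight, some $O_p(N)\neq 1$ with $p\in\{5,7,11\}$, and the paper uses the subgroup $G_2(5)\le Ly$ together with unisingularity (for $p=5$) or the Tiep--Zalesski minimum-polynomial theorem (for $p\in\{7,11\}$) to force an edge from $p$ to $31$, collapsing a component. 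For $O'N$ one new prime $p$ must appear in $N$, and a Brauer-character computation (Lemma~\ref{BrChar}) shows that in any faithful $\mathbb{F}_pS$-module the elements whose orders lie in the non-principal components of $\Gamma(O'N)$ all have nonzero fixed points, again collapsing a component. Your column-orthogonality idea for $J_1$ and $M_{22}$ does not obviously transfer: for $Ly$ there are no new primes at all, so the issue is not fixed points of order-$3$ elements but producing a specific extra edge inside $\pi_1$.

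For part $(ii)$ with $q\in\{3,7,8,17\}$ your sketch is too schematic. Theorem~6.1 of~\cite{MasPansStar} assumes the \emph{labelled} equality $\Gamma(H)=\Gamma(E_8(q))$, so it cannot be invoked until $S$ is identified, $q_1$ is pinned down, and $\pi(H)=\pi(S)$. The paper first excludes $S\cong{}^2B_2(q_1)$ by noting that then $\pi_1(G)=\{2\}\cup\pi(N)\cup\pi(\overline G/S)$, so the vertex $2$ has degree $|\pi_1(G)|-1$, contradicting the explicit degree sequences in Lemma~\ref{OrdersAndGraphs}. For $S\cong E_8(q_1)$ it bounds $|\pi(q_1(q_1^2-1))|\le 3$ by counting vertices of large degree (Lemma~\ref{Primes20}), reducing to $q_1\in\{2,3,4,5,7,8,9,17\}$ via Lemma~\ref{PrimesSmall}; and for $q\in\{8,17\}$ it must separately rule out $N\neq 1$ using Lemma~\ref{E8nontrivK_R24} (a nontrivial $N$ forces $R_{24}(q_1)\subset\pi_1(G)$, hence $s(S)=5$) followed by a rather delicate count of how many vertices of degree at least $5$ can occur. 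Only after all this does one obtain $\Gamma(G)=\Gamma(L)$ as labelled graphs and can apply~\cite{MasPansStar}; the final separation of the admissible $q_1$ is then exactly your ``cyclotomic computation'', carried out in the paper as the explicit data of Lemma~\ref{OrdersAndGraphs}.
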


This paper contains a revised and improved proof of Main Theorem for groups $E_8(q)$; the original proof based on more calculations was obtained by the second author with some participation of the third author; and announcement can be found, for example, in~\cite{Maslova_UWGTC2020}.

The following problems naturally arise.

\medskip

{\bf Problem~1.} {\it Are there other finite simple groups which are recognizable by isomorphism types of Gruenberg--Kegel graph{\rm?}}

\medskip

{\bf Problem~2.} {\it Is there an almost simple but not simple group which is recognizable by isomorphism type of Gruenberg--Kegel graph{\rm?}}

\section{Preliminaries}

If $n$ is an integer and $r$ is an odd prime with $(r, n) = 1$, then $e(r, n)$ denotes the multiplicative order of $n$ modulo $r$. Given an odd integer $n$, we put $e(2, n) = 1$ if $n\equiv1\pmod{4}$, and $e(2,n)=2$ otherwise.

The following lemma is proved in~\cite{Bang}, and also in~\cite{Zs92}.
\begin{lemm}[{\rm Bang–Zsigmondy}]\label{zsigm}
Let $q$ be an integer greater than $1$. For every positive integer $m$ there exists a prime $r$ with $e(r,q)=m$ besides the cases $q=2$ and $m=1$, $q=3$ and $m=1$, and $q=2$ and $m=6$.
\end{lemm}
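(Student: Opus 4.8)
The plan is to prove the statement in its equivalent form about primitive prime divisors, via cyclotomic polynomials. Recall that, for an odd prime $r$, the condition $e(r,q)=m$ means precisely that $r\mid q^m-1$ but $r\nmid q^k-1$ for all $1\le k<m$; such an $r$ is a \emph{primitive prime divisor} of $q^m-1$. Using the factorization $q^m-1=\prod_{d\mid m}\Phi_d(q)$, where $\Phi_d$ denotes the $d$-th cyclotomic polynomial, I would first observe that any primitive prime divisor of $q^m-1$ must divide $\Phi_m(q)$ (it divides some $\Phi_d(q)$ with $d\mid m$, hence $q^d-1$, and minimality forces $d=m$). So it suffices to produce a suitable prime factor of $\Phi_m(q)$.

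The technical heart is the standard order lemma: if a prime $r$ divides $\Phi_m(q)$ and $d=e(r,q)$, then $m=d\,r^{j}$ for some $j\ge 0$; moreover, if $j\ge 1$ then $r$ is the largest prime divisor of $m$ and $r^2\nmid \Phi_m(q)$. I would establish this by reducing $\Phi_m(q)$ modulo $r$ together with the lifting-the-exponent identity $v_r(q^m-1)=v_r(q^d-1)+v_r(m/d)$, which pins down the $r$-adic valuation of $\Phi_m(q)$ in the non-primitive case. The consequence is that the product of all non-primitive prime powers dividing $\Phi_m(q)$ is at most the largest prime factor $P(m)\le m$ of $m$; hence $\Phi_m(q)$ has a primitive prime divisor whenever $\Phi_m(q)>P(m)$.

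It then remains to estimate $\Phi_m(q)$ from below. Writing $\Phi_m(q)=\prod_{\gcd(k,m)=1}(q-\zeta_m^{k})$ with $\zeta_m=e^{2\pi i/m}$ and pairing complex-conjugate roots, each conjugate pair contributes $q^2-2q\cos\theta+1\ge (q-1)^2$ (and for $m=2$ one has $\Phi_2(q)=q+1>q-1$), so that $\Phi_m(q)\ge (q-1)^{\varphi(m)}$. Since $\varphi(m)$ grows while $P(m)\le m$, the inequality $(q-1)^{\varphi(m)}>P(m)$ holds for all but finitely many pairs $(q,m)$, namely those with $q$ and $\varphi(m)$ both small. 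I would dispatch this bounded list by directly factoring $\Phi_m(q)$, which isolates exactly the genuinely exceptional pairs $(q,m)=(2,1)$ and $(2,6)$ for the ordinary multiplicative order.

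Finally, I would reconcile this with the special definition of $e(2,\cdot)$ used in the statement. The usual Zsigmondy exceptions for $m=2$ occur precisely when $q+1$ is a power of $2$, so that $2$ is the only prime dividing $q^2-1$ beyond the divisors of $q-1$; in every such case $q\equiv 3\pmod 4$, whence the convention sets $e(2,q)=2$ and $r=2$ serves as the required witness, eliminating the whole $m=2$ family. Applying the same convention to $m=1$ shows that the sole surviving obstruction there is $q=3$ (the unique $q$ with $q-1$ a power of $2$ and $q\equiv 3\pmod 4$), yielding the exception $(3,1)$, while $q=2$ gives $(2,1)$. Collecting the cases produces exactly the exceptional set $\{(2,1),(3,1),(2,6)\}$. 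The main obstacle is the careful bookkeeping of the prime $2$ under this nonstandard convention, combined with the finite case-check near the boundary of the size estimate, so as to certify that the exception list is neither too large nor too small.
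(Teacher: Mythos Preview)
The paper does not prove this lemma at all; it simply cites the original sources \cite{Bang} and \cite{Zs92}. So there is no in-paper argument to compare against, and your sketch is already far more than what the authors provide. Your route via cyclotomic polynomials---the order lemma for primes dividing $\Phi_m(q)$, the bound on the non-primitive part by $P(m)$, and a size estimate on $\Phi_m(q)$---is the standard modern proof, and your treatment of the convention $e(2,q)\in\{1,2\}$ to absorb the classical $m=2$ exceptions (the Mersenne case $q+1=2^k$) and to isolate $(q,m)=(3,1)$ is correct.

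There is, however, one genuine gap. Your lower bound $\Phi_m(q)\ge (q-1)^{\varphi(m)}$ is vacuous when $q=2$, since it gives $\Phi_m(2)\ge 1$, and hence the inequality $(q-1)^{\varphi(m)}>P(m)$ fails for \emph{every} $m\ge 2$ when $q=2$. Thus your ``bounded list'' of residual pairs is not bounded: it contains all of $\{(2,m):m\ge 2\}$, and you cannot dispatch it by finite inspection. You need a sharper estimate that survives $q=2$, for instance $\Phi_m(q)\ge q^{\varphi(m)}\prod_{p\mid m}\frac{q^{\varphi(m)/(p-1)}-1}{q^{\varphi(m)/(p-1)}}$ or, more simply, the inequality $\Phi_m(2)\ge 2^{\varphi(m)}/2$ obtained by pairing conjugate roots and using $|2-\zeta|\,|2-\bar\zeta|=5-4\cos\theta\ge 2$ for $|\theta|\ge \pi/3$, together with a short check for the finitely many $m$ with a primitive root in the arc $|\theta|<\pi/3$. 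Any such refinement reduces the residual set to a genuinely finite list and then your direct-factorisation step recovers exactly $(2,6)$ as the sole large-$m$ exception at $q=2$. (A minor related point: your claim $r^2\nmid\Phi_m(q)$ for non-primitive $r$ fails for $r=2$, $m=2$; this is harmless once $m=1,2$ are handled separately, but it is worth saying explicitly that the $P(m)$ bound is asserted only for $m\ge 3$.)
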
	

Fix an integer $a$ with $|a|>1$. A prime $r$ is said to be a primitive prime divisor of $a^i-1$ if $e(r,a)=i$. We write $r_i(a)$ to denote some primitive prime divisor of $a^i-1$ if such a prime exists, and $R_i(a)$ to denote the set of all such divisors.

\begin{lemm}[{\rm \cite{Herzog}}]\label{PrimesSmall} Let $q$ be a prime power. Then $|\pi(q^2-1)|\le 2$ if and only if $q \in \{2, 3, 4, 5, 7, 8, 9, 17\}$.
\end{lemm}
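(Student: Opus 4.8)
\textbf{Plan of proof for Lemma~\ref{PrimesSmall}.}
The statement asserts that a prime power $q$ satisfies $|\pi(q^2-1)| \le 2$ precisely when $q \in \{2, 3, 4, 5, 7, 8, 9, 17\}$. The plan is to split the argument into the two directions. The ``if'' direction is a finite verification: for each of the eight listed values one factorises $q^2-1 = (q-1)(q+1)$ and counts the distinct primes. For instance $q=17$ gives $17^2-1 = 288 = 2^5 \cdot 3^2$, so $\pi = \{2,3\}$; similarly $q=7$ gives $48 = 2^4\cdot 3$, $q=8$ gives $63 = 3^2\cdot 7$, and so on, each yielding at most two primes. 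This direction requires no ideas, only arithmetic.

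The substantive direction is ``only if'': I must show that if $q \notin \{2,3,4,5,7,8,9,17\}$, then $q^2-1$ has at least three distinct prime divisors. The first step is to reduce to a bound on the size of $q$. The key observation is that $q-1$ and $q+1$ are consecutive even or consecutive integers: for $q$ odd, $\gcd(q-1,q+1)=2$, so apart from the shared factor $2$ the odd parts of $q-1$ and $q+1$ are coprime, and each contributes its own primes; for $q$ even, $q$ is a power of $2$, and then $q^2-1 = (q-1)(q+1)$ is a product of two odd coprime integers. I would exploit this coprimality to argue that once $q$ is large, each of the two factors $q\pm 1$ must itself be a prime power (a $\{2,3\}$-number in the odd case, a single prime power in the even case) in order to keep the total prime count at two, and then show this forces $q$ to be small.

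Concretely, for $q$ even ($q=2^k$): both $q-1$ and $q+1$ are odd and coprime, so $|\pi(q^2-1)| \le 2$ forces each of $q-1, q+1$ to be a prime power. But $q-1$ and $q+1$ being two prime powers differing by $2$ is a very restrictive Catalan/Mih\u{a}ilescu-type or elementary mod-$3$ condition: among $q-1, q, q+1$ one is divisible by $3$, and since $q=2^k$ is not, one of $q\pm1$ is a power of $3$ while the other is a power of some prime; checking small $k$ and ruling out large $k$ by a size/congruence argument isolates $q \in \{2,4,8\}$. For $q$ odd, write $q^2-1 = 2^a \cdot m$ with $m$ odd; the odd parts of $q-1$ and $q+1$ are coprime, so to have only the single odd prime available (besides $2$) all but one of these odd parts must be $1$, i.e. one of $q\pm 1$ is a power of $2$ and the other is $2$ times a prime power. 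Again a congruence mod $3$ (one of $q\pm1$ is divisible by $3$) together with a size estimate pins down $q \in \{3,5,7,9,17\}$; the value $17$ arises exactly because $17-1=2^4$ and $17+1 = 2\cdot 3^2$, the last case where a power of $2$ sits adjacent to a $\{2,3\}$-number.

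The main obstacle is making the ``large $q$ forces three primes'' step rigorous without appealing to heavy number-theoretic machinery. The cleanest route I would take is to use Lemma~\ref{zsigm} (Bang--Zsigmondy): a primitive prime divisor $r_2(q)$ of $q^2-1$ exists (dividing $q+1$) and a primitive prime divisor $r_1(q)$ of $q-1$ exists, and these together with the prime $2$ already give three candidates; the delicate point is controlling the cases where these primitive divisors coincide with $2$ or with each other, which is exactly where the small exceptions $q \le 17$ survive. Thus the heart of the proof is the careful bookkeeping of when $R_1(q)$, $R_2(q)$, and $\{2\}$ fail to be three distinct primes, and I expect the bulk of the work to lie in closing off these low-$q$ coincidences by direct inspection rather than in the asymptotic regime.
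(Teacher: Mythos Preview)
The paper does not prove this lemma at all: it simply cites Herzog's 1968 paper and moves on. So there is no ``paper's own proof'' to compare against; your plan supplies what the paper outsources.

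Your outline is correct and would go through. The coprimality decomposition $q^2-1=(q-1)(q+1)$ with $\gcd(q-1,q+1)\in\{1,2\}$, followed by the case split on the parity of $q$ and the observation that one of $q\pm1$ must be a power of $3$ (or of $2$ in the odd-$q$ case), is exactly the right skeleton. Two remarks. First, you do not need Catalan--Mih\u{a}ilescu: the equations $2^a+1=3^m$ and $2^a-1=3^m$ can be dispatched by reducing modulo~$8$ (for $a\ge3$ one gets $3^m\equiv1\pmod8$, forcing $m$ even, and then $2^a=(3^{m/2}-1)(3^{m/2}+1)$ with both factors powers of $2$ differing by $2$, hence $\{2,4\}$), so the whole argument stays elementary. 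Second, the Zsigmondy route you sketch in your final paragraph is less clean than the direct case analysis you already outlined: Zsigmondy for $m=2$ only guarantees a prime in $R_2(q)$, and disentangling when $R_1(q)$, $R_2(q)$ and $\{2\}$ collapse still forces you back into the same small-$q$ bookkeeping. Stick with your first approach.
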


Let $G$ and $H$ be groups, $n$ be a positive integer, $p$ be a prime and $\pi$ be a set of primes. We denote by $S(G)$ the {\it solvable radical} of $G$ (the largest solvable normal subgroup of $G$), by $F(G)$ the {\it Fitting subgroup} of $G$ (the largest nilpotent normal subgroup of $G$), by $\Phi(G)$ the {\it Frattini subgroup} of $G$ (the intersection of all maximal subgroups of $G$), and by $Soc(G)$ the {\it socle} of $G$ (the subgroup of $G$ generated by the set of all non-trivial minimal normal subgroups of $G$). By $G.H$ we denote any extension of $G$ by $H$, by $G:H$ (or $G \rtimes H$) we denote a split extension (or semidirect product) of $G$ by $H$. By $\pi(n)$ we denote the set of all prime divisors of $n$; in this notation, $\pi(G)=\pi(|G|)$. We tell that $G$ is a $\pi$-group if $\pi(G) \subseteq \pi$ and $G$ is a $p$-group if $\pi(G)=\{p\}$. By $O_\pi(G)$ and $O_p(G)$ we denote the largest normal $\pi$-subgroup and the largest normal $p$-subgroup of $G$, respectively. By $IBr_p(G)$ we denote the set of irreducible $p$-Brauer characters of $G$. Denote the number of connected components of $\Gamma(G)$ by $s(G)$, and the set of connected components of $\Gamma(G)$ by $\{\pi_i(G) \mid 1 \leq i \leq s(G) \}$; for a group $G$ of even order, we assume that $2 \in \pi_1(G)$. Denote by $t(G)$ the \emph{independence number} of $\Gamma(G)$ (the greatest cardinality of a coclique in $\Gamma(G)$), and by $t(r,G)$ the greatest cardinality of a coclique in $\Gamma(G)$ containing a prime $r$.

\medskip

The next assertion is well-known and easy-proving.

\begin{lemm}\label{NormalSeriesAdj} Let $K$ be a normal subgroup of a group $L$. Then the following conditions hold{\rm:}

$(1)$ if $r, s \in \pi(K) \setminus \pi(L/K)$ and $r$ and $s$ are non-adjacent in $\Gamma(K)$, then they are also non-adjacent in $\Gamma(L)${\rm;}

$(2)$ if $r, s \in \pi(L/K) \setminus \pi(K)$, and $r$ and $s$ are non-adjacent in $\Gamma(L/K)$, then they are also non-adjacent in $\Gamma(L)$.

\smallskip
In particular, if $A$ and $B$ are normal subgroups of a group $G$ such that $A\le B$ and $r, s \in \pi(B/A)\setminus (\pi(A) \cup \pi(G/B))$, then $r$ and $s$ are adjacent in $\Gamma(G)$ if and only if $r$ and $s$ are adjacent in $\Gamma(B/A)$.
\end{lemm}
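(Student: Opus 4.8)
The plan is to prove both parts by transferring the existence of an element of order $rs$ through the canonical epimorphism $L \to L/K$, distinguishing the two cases according to whether the relevant primes sit in $K$ or in $L/K$. In each case I argue by contraposition, starting from adjacency in $\Gamma(L)$ and recovering adjacency in the appropriate section.

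For $(1)$, suppose $r$ and $s$ are adjacent in $\Gamma(L)$, so there is $g \in L$ with $|g| = rs$. Passing to $L/K$, the order of $gK$ divides $rs$; but $r, s \notin \pi(L/K)$ means, by Lagrange, that $|L/K|$ is divisible by neither $r$ nor $s$, so $|gK|$ is coprime to $rs$ and hence equals $1$. Thus $g \in K$, which gives an element of order $rs$ in $K$ and therefore adjacency of $r$ and $s$ in $\Gamma(K)$, as required.

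For $(2)$, again take $g \in L$ with $|g| = rs$ and consider its image $gK$. Set $a = g^s$ and $b = g^r$, so that $|a| = r$ and $|b| = s$. Since $r \notin \pi(K)$ we cannot have $a \in K$ (otherwise $K$ would contain an element of order $r$), whence $|aK|$ divides the prime $r$ and is nontrivial, i.e. $|aK| = r$; symmetrically $|bK| = s$. As $|aK| = |(gK)^s|$ divides $|gK|$ and likewise $|bK| = |(gK)^r|$ divides $|gK|$, we get $rs \mid |gK|$; combined with $|gK| \mid rs$ this forces $|gK| = rs$, so $r$ and $s$ are adjacent in $\Gamma(L/K)$.

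Finally, for the \textquotedblleft in particular\textquotedblright\ assertion I would apply the two parts in sequence to the section $A \le B \le G$. For the implication \textquotedblleft adjacent in $\Gamma(B/A)$ $\Rightarrow$ adjacent in $\Gamma(G)$\textquotedblright, pick $xA \in B/A$ of order $rs$; then $\langle x\rangle A/A = \langle xA\rangle$ is cyclic of order $rs$, so $\langle x\rangle$ is cyclic of order $rs\cdot|\langle x\rangle \cap A|$ with the second factor coprime to $rs$ (as $r,s \notin \pi(A)$), and hence $\langle x\rangle \le B \le G$ contains an element of order $rs$. For the converse I use contraposition: starting from non-adjacency in $\Gamma(B/A)$, first apply $(1)$ to the normal subgroup $B/A$ of $G/A$---legitimate since $r,s \in \pi(B/A)\setminus\pi((G/A)/(B/A)) = \pi(B/A)\setminus\pi(G/B)$---to get non-adjacency in $\Gamma(G/A)$, and then apply $(2)$ to the normal subgroup $A$ of $G$---legitimate since $r,s \in \pi(G/A)\setminus\pi(A)$---to obtain non-adjacency in $\Gamma(G)$. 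The argument is entirely elementary, so I expect no serious obstacle; the only points demanding care are matching, at each layer, the hypotheses of $(1)$ versus $(2)$ to the correct quotient, and noticing that the upward implication in the final claim is genuinely separate from $(1)$ and $(2)$ and relies on $r,s \notin \pi(A)$ to lift the order-$rs$ element from $B/A$ to $B$.
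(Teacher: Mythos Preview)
Your proof is correct. The paper itself does not spell out the argument: for $(1)$ it simply cites an external reference, and for $(2)$ it says the claim follows from the Schur--Zassenhaus theorem, while the ``in particular'' clause is left without further comment. Your direct elementary argument---showing in each case that an element of order $rs$ in $L$ forces one in the appropriate section by tracking orders through the quotient map---is in fact simpler than invoking Schur--Zassenhaus for $(2)$: once you observe that $\langle g\rangle \cap K = 1$ (because $r,s\notin\pi(K)$), the isomorphism $\langle g\rangle K/K \cong \langle g\rangle$ already gives $|gK|=rs$, and no splitting theorem is needed. Your explicit derivation of the ``in particular'' statement, separating the lifting direction (which needs $r,s\notin\pi(A)$) from the descent direction (which chains $(1)$ on $B/A\trianglelefteq G/A$ and then $(2)$ on $A\trianglelefteq G$), is a useful addition that the paper omits.
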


\begin{proof}

$(1)$ Proof can be found, for example, \cite[Lemma~2]{MaslovaPrGrSub}.

$(2)$ Follows directly from the Shur--Zassenhaus Theorem~\cite[Theorem~6.2.1]{Gorenstein}.

\end{proof}

A group $G$ is called a {\it Frobenius group} if there is a subgroup $H$ of $G$ such that $H \cap H^g=1$ for each $g \in G\setminus H$. Let $$
K=\{1_G\} \cup (G \setminus ( \cup_{g \in G} H^g))$$ be the {\it Frobenius kernel} of $G$. It is well-known~\cite[35.24 and~35.25]{Asch86} that $K \trianglelefteq G$, $G=K \rtimes H$, $C_G(h)\le H$ for each $h \in H$, and $C_G(k)\le K$ for each $k \in K$. Moreover, by the Thompson theorem on finite groups with fixed-point-free automorphisms of prime order~\cite[Theorem~1]{Thompson}, $K$ is nilpotent. A $2$-Frobenius group is a group $G$ which contains a normal Frobenius subgroup $R$ with Frobenius kernel $A$ such that $G/A$ is a Frobenius group with Frobenius kernel $R/A$.

\begin{lemm}[{\rm Gruenberg--Kegel Theorem, follows from \cite[Theorem~A]{Williams} and~\cite[Theorem~1]{Thompson}}]\label{Gruenberg--Kegel theorem} If~$G$ is a group with disconnected Gruenberg--Kegel graph, then one of the following statements holds{\rm:}
\begin{itemize}
\item[$(1)$] $G$ is a Frobenius group{\rm;}
\item[$(2)$] $G$ is a $2$-Frobenius group{\rm;}
\item[$(3)$] $G$ is an extension of a nilpotent $\pi_1(G)$-group by a group~$A$, where $S \unlhd A\le Aut(S)$,~$S$ is a simple non-abelian group with $s(G)\le s(S)$, and $A/S$ is a $\pi_1(G)$-group.
\end{itemize}
\end{lemm}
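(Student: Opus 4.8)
The plan is to isolate one combinatorial lemma that drives everything and then to split into the cases ``$G$ solvable'' and ``$G$ non-solvable''. Write the connected components of $\Gamma(G)$ as $\pi_1(G),\dots,\pi_{s(G)}(G)$ with $2\in\pi_1(G)$, and abbreviate $\pi_i=\pi_i(G)$. The engine is the following rigidity: for every $i$ and every nontrivial element $x$ whose order is a $\pi_i$-number, the centralizer $C_G(x)$ is a $\pi_i$-group. Indeed, if a prime $q$ divided $|C_G(x)|$ with $q\notin\pi(x)$, then a $q$-element would commute with some prime-order element $x_p\in\langle x\rangle$ (where $p\in\pi(x)\subseteq\pi_i$), producing an element of order $pq$; hence $q$ would be adjacent to $p$, so $q\in\pi_i$. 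Thus centralizers of $\pi_i$-elements stay inside $\pi_i$, which is precisely what forces the Frobenius behaviour below.

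\emph{Solvable case.} Suppose $G$ is solvable. As $F(G)$ is nilpotent, its Sylow subgroups commute pairwise, so $\pi(F(G))$ lies in a single component, and $C_G(F(G))\le F(G)$ holds. Combining this with the engine, I would show that any element whose order is prime to that component acts without nontrivial fixed points on $F(G)$: a fixed point would be a $\pi_i$-element centralizing a prime of another component, contradicting the engine. Fixed-point-free action is exactly the Frobenius condition; examining whether the resulting complement is itself acted on freely then separates case $(1)$ from case $(2)$, while the cited Thompson theorem keeps the relevant kernels nilpotent and forces $s(G)=2$.

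\emph{Non-solvable case.} Put $R=S(G)\ne G$ and pass to $\overline{G}=G/R$, whose solvable radical is trivial, so $Soc(\overline{G})$ is a direct product of non-abelian simple groups. Here I aim at case $(3)$, and the reconnection principle does the work. Every non-abelian simple group has even order, so if $Soc(\overline{G})$ had two factors, then commuting elements taken from different factors would make every prime of one factor adjacent to $2$ and to every prime of the other; together with the facts (established as part of the same analysis) that $R$ and $\overline{G}/Soc(\overline{G})$ contribute only primes of $\pi_1$, and that $\overline{G}/Soc(\overline{G})$ is solvable by Schreier's conjecture, one checks that $\Gamma(G)$ would be connected, a contradiction. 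Hence $Soc(\overline{G})=S$ is a single non-abelian simple group, $C_{\overline{G}}(S)=1$, and $\overline{G}$ is almost simple. Now take an odd component $\pi_j$ with $j\ge 2$ and an element $x$ of prime order $t\in\pi_j$; by the engine $C_G(x)$ is a $\pi_j$-group, so $C_R(x)=C_G(x)\cap R=1$. Thus $x$ induces a fixed-point-free automorphism of prime order on $R$, and the cited Thompson theorem yields that $R$ is nilpotent. Finally, each component $\pi_j$ with $j\ge 2$ consists of primes of $S$ and is a union of components of $\Gamma(S)$, whence $s(G)\le s(S)$. This is exactly the extension described in $(3)$, with the nilpotent $\pi_1$-group $R$ at the bottom and $A=\overline{G}$ almost simple.

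\emph{Main obstacle.} The delicate part is the non-solvable case. Proving that $\pi(R)\subseteq\pi_1$ and that $Soc(\overline{G})$ is a single simple factor---rather than several, or rather than a higher component leaking into the solvable radical or into the outer part---is where the weight lies, since each alternative must be excluded by exhibiting a reconnection of the graph. These reconnection arguments ultimately rest on two inputs beyond elementary group theory: Schreier's conjecture (solvability of $Aut(S)/S$), which controls $\overline{G}/S$, and the evenness of the orders of non-abelian simple groups. Both are consequences of the classification, which is why the statement is attributed to Williams' theorem built on Thompson's theorem.
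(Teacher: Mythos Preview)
The paper does not supply a proof of this lemma; it is simply cited as following from Williams' Theorem~A and Thompson's fixed-point-free theorem. Your engine lemma (centralizers of $\pi_i$-elements are $\pi_i$-groups) is correct and is indeed the combinatorial heart of the matter, and your overall architecture---solvable versus non-solvable, then Thompson for nilpotency of the bottom---matches the standard route.

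There is, however, a genuine circularity in your non-solvable case. You assert that $\pi(R)\subseteq\pi_1$ is ``established as part of the same analysis'' and then use it to deduce $C_R(x)=C_G(x)\cap R=1$ for an element $x$ of prime order $t\in\pi_j$ with $j\ge 2$, after which Thompson gives $R$ nilpotent. But the engine only tells you that $C_R(x)$ is a $\pi_j$-group; to conclude it is trivial you must already know that $R$ contains no $\pi_j$-elements, which is exactly the unproved inclusion $\pi(R)\subseteq\pi_1$. Worse, without $\pi_j\cap\pi(R)=\varnothing$ you cannot even guarantee that a $t$-element lies outside $R$, so it need not induce a nontrivial automorphism of $R$ at all. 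The reconnection heuristics you list (Schreier's conjecture, evenness of $|S|$) do control $\overline{G}/S$ and rule out several simple factors in $Soc(\overline{G})$, but they do nothing to expel the odd components $\pi_j$ from the solvable radical. In Williams' argument this step is handled differently: one first shows, from the engine, that for each $j\ge 2$ the $\pi_j$-elements together with $1$ form a \emph{nilpotent isolated Hall subgroup} $H_j$ (a TI-set whose normalizer is Frobenius over $H_j$), and it is the analysis of how such an isolated subgroup can sit in a non-solvable group that forces $\pi_j$ into the simple section $S$ and simultaneously yields the normal series with nilpotent $\pi_1$-kernel. That piece of machinery is what your outline is missing; once it is in place, the remainder of your sketch (uniqueness of the simple factor, $A/S$ a $\pi_1$-group, $s(G)\le s(S)$) goes through essentially as you describe.
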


\begin{lemm}[{\rm \cite[Lemma~1.1, Table~3]{ak}}]\label{Table} Let $S$ be a finite simple group with disconnected Gruenberg--Kegel graph. Then the following statements hold{\rm:}

$(1)$ for $i>1$, each $\pi_i(S)$ forms a clique in $\Gamma(S)${\rm;}

$(2)$ If $s(S)\ge 4$, then $S$ is in~Table~$1$.

\end{lemm}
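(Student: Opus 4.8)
The final statement to prove is Lemma~\ref{Table}, which I restate: if $S$ is a finite simple group with disconnected Gruenberg--Kegel graph, then (1) for each $i>1$, every $\pi_i(S)$ forms a clique in $\Gamma(S)$, and (2) if $s(S)\geq 4$, then $S$ appears in Table~1. Since this is attributed to Kondrat'ev--Mazurov~\cite{ak}, my plan is to reconstruct how such a result is established, relying on the classification of finite simple groups and the known classification of simple groups with disconnected Gruenberg--Kegel graph.

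\textbf{Proof of part (1).} The plan is to argue that each non-principal connected component $\pi_i(S)$ (with $i>1$) is contained in a single ``isolated'' part of the spectrum. First I would invoke the structural results underlying the Gruenberg--Kegel Theorem (Lemma~\ref{Gruenberg--Kegel theorem}) together with the Williams--Kondrat'ev classification of finite simple groups with disconnected prime graph. For a simple group $S$ of Lie type in characteristic $p$, the non-principal components correspond to sets of primes dividing the orders of certain maximal tori $T$ that are, as abelian subgroups, \emph{TI-subgroups} or lie inside disjoint centralizers; the point is that all primes in a fixed $\pi_i(S)$ divide the order of one such torus and hence are pairwise adjacent because the torus is abelian and contains elements of order equal to the product. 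For alternating and sporadic groups the claim would be checked directly from the known component structure. The cleanest uniform argument I would attempt: for $i>1$, fix $r,s\in\pi_i(S)$; because they lie in the same connected component, there is a path joining them, and I would show that the centralizer structure forces a \emph{single} cyclic (or abelian) subgroup meeting both primes, using that $\pi_i(S)$ for $i>1$ is a ``small'' isolated piece governed by one conjugacy class of tori or one cyclic Hall subgroup. The key input is that each such component is in fact the prime set of a nilpotent Hall subgroup, which immediately yields the clique property.

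\textbf{Proof of part (2).} For the bound $s(S)\geq 4$ I would proceed by direct appeal to the classification. The plan is to enumerate, across the families of finite simple groups (alternating, the sixteen Lie-type families in all characteristics, and the sporadics), exactly which ones have at least four connected components in their Gruenberg--Kegel graph. First I would recall the explicit tables of Williams and Kondrat'ev giving $s(S)$ and the components $\pi_i(S)$ for every simple $S$ with disconnected graph; then I would filter this list by the condition $s(S)\geq 4$. The outcome is a short finite list (for instance $J_4$ with $s=6$, certain groups like ${}^2E_6(2)$, $E_8(q)$ for appropriate $q$, and a handful of others) which constitutes Table~1. The verification for each entry is a computation of multiplicative orders of $q$ modulo the relevant primitive prime divisors (via Lemma~\ref{zsigm}), identifying which primes fall into distinct components.

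\textbf{Main obstacle.} The genuinely hard part is part~(2): it is not a conceptual argument but rather a comprehensive case analysis resting on the full classification of finite simple groups together with a careful bookkeeping of connected components across all Lie-type families and all residues of $q$ modulo small primes. For the exceptional and classical groups of large rank, determining the component structure requires knowing which primitive prime divisors $r_i(q)$ are pairwise non-adjacent, which in turn depends on delicate torus-centralizer computations; assembling these into a single finite table is where essentially all the work lies. Since this is precisely the content of~\cite[Lemma~1.1, Table~3]{ak}, in practice I would present part~(1) with the clique argument sketched above and then establish part~(2) by citing and reproducing the relevant portion of that classification, verifying only the entries with $s(S)\geq 4$.
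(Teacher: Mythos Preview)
The paper does not prove Lemma~\ref{Table} at all: it is stated with a citation to \cite[Lemma~1.1, Table~3]{ak} and no argument is given. So there is nothing to compare at the level of strategy; the paper treats the result as an imported black box, while you attempt to reconstruct the underlying proof.

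Your outline for part~(2) is accurate: this is purely a filtration of the Williams--Kondrat'ev tables by the condition $s(S)\ge 4$, and you correctly identify that the labour is bookkeeping rather than a conceptual argument. For part~(1), however, your ``uniform'' path-and-centralizer sketch does not work as stated. Knowing that $r$ and $s$ lie in the same connected component gives only a path $r=r_0,r_1,\dots,r_k=s$ with $r_{j-1}r_j\in\omega(S)$; nothing in that data alone forces a single abelian subgroup meeting both $r$ and $s$, and the centralizer structure you allude to is exactly what must be \emph{proved}, not assumed. The way this is actually established in the literature (and in \cite{ak}, following Williams and Kondrat'ev) is not uniform: one reads off from the classification tables that for every simple $S$ with disconnected prime graph and every $i>1$, the component $\pi_i(S)$ equals $\pi(m_i)$ for an explicit integer $m_i$ which is the order of a single (typically cyclic, torus) element of $S$. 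That immediately yields the clique property. Your final sentence---that $\pi_i(S)$ is the prime set of a nilpotent Hall subgroup---is the correct endpoint, but it is obtained case by case from the tables, not by the connectivity argument you propose.
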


\begin{table}

\centerline{{\bf Table 1.} Finite simple groups~$S$ with $s(S)>3$} \vspace{2mm}
{\footnotesize
\begin{tabular}{|p{5mm}|p{10mm}|p{18mm}|p{30mm}|p{20mm}|p{20mm}|p{20mm}|p{14mm}|p{6mm}|}
\hline
\centerline{$s(S)$} & \centerline{$S$} & \centerline{$Restrictions$} & \centerline{$\pi_1(S)$} & \centerline{$\pi_2(S)$}
& \centerline{$\pi_3(S)$} & \centerline{$\pi_4(S)$} & \centerline{$\pi_5(S)$} & \centerline{$\pi_6(S)$}\\ [3ex] \hline \centerline{$4$} & & & & & & & &\\[1ex]

& \centerline{$PSL_3(4)$} & & \centerline{$\{2\}$} & \centerline{$\{3\}$} & \centerline{$\{5\}$} & \centerline{$\{7\}$} & &\\[1ex]

& \centerline{$^2B_2(q)$} & \centerline{$q{=}2^{2m{+}1}{>}2$} & \centerline{$\{2\}$} & \centerline{$\pi(q{-}1)$} & \centerline{$\pi(q{-}\sqrt
{2q}{+}1)$} & \centerline{$\pi(q{+}\sqrt {2q}{+}1)$} & &\\[1ex]

& \centerline{$^2E_6(2)$} & & \centerline{$\{2, 3, 5, 7, 11\}$} & \centerline{$\{13\}$} & \centerline{$\{17\}$} & \centerline{$\{19\}$} & &\\[1ex]

& $E_8(q)$ &$q{\equiv} 2,3(5)$ & $\pi \big(q(q^8{-}1)(q^{12}{-}1)$ & $\pi(\displaystyle\frac{
q^{10}{+}q^5{+}1}{ q^2{+}q{+}1})$ & $\pi(q^8{-}q^4{+}1)$ & $\pi(\displaystyle\frac{
q^{10}{-}q^5{+}1}{ q^2{-}q{+}1})$ & &\\[0.3ex] & & & $(q^{14}{-}1)(q^{18}{-}1)$
& & & & &\\[0.5ex] & & & $(q^{20}{-}1)\big)$ & & & & &\\[1ex]

& \centerline{$M_{22}$} & &
\centerline{$\{2, 3\}$} & \centerline{$\{5\}$} & \centerline{$\{7\}$} & \centerline{$\{11\}$} & &\\[1ex]

& \centerline{$J_1$} & & \centerline{$\{2, 3, 5\}$} & \centerline{$\{7\}$} & \centerline{$\{11\}$} & \centerline{$\{19\}$} &
&\\[1ex]

& \centerline{$O'N$} & & $\{2, 3, 5, 7\}$ & \centerline{$\{11\}$} & \centerline{$\{19\}$} & \centerline{$\{31\}$} & &\\[1ex]

& \centerline{$Ly$} & &
$\{2, 3, 5, 7, 11\}$ & \centerline{$\{31\}$} & \centerline{$\{37\}$} & \centerline{$\{67\}$} & &\\[1ex]

& \centerline{$Fi_{24}'$} & & $\{2, 3, 5, 7,
11, 13\}$ & \centerline{$\{17\}$} & \centerline{$\{23\}$} & \centerline{$\{29\}$} & &\\[1ex]

&\centerline{$\mathbb{M}$} & & $\{2, 3, 5, 7, 11, 13, 17,$ & \centerline{$\{41\}$} &
\centerline{$\{59\}$} &\centerline{$\{71\}$}& &\\[0.5ex] & & & $ 19, 23, 29, 31, 47\}$ & & & & &\\[1ex]
\hline

\centerline{$5$} &  &  &  &  & & & &\\[3ex]

 & $E_8(q)$ &$q{\equiv} 0,1,4(5)$ & $\pi (q(q^8{-}1)(q^{10}{-}1)$ &
$\pi(\displaystyle\frac{q^{10}{+}q^5{+}1}{q^2{+}q{+}1})$ &
$\pi(\displaystyle\frac{q^{10}{-}q^5{+}1}{q^2{-}q{+}1})$ & $\pi(q^8{-}q^4{+}1)$&
$\pi(\displaystyle\frac{q^{10}{+}1}{q^2{+}1})$ &\\[0.5ex]

& & & $(q^{12}{-}1)(q^{14}{-}1)$ & & & & &\\[0.5ex]

& & & $(q^{18}{-}1))$ & & & & & \\[1ex]

\hline

\centerline{$6$} & & & & & & & & \\[1ex]

 & \centerline{$J_4$} & & \centerline{$\{2, 3, 5, 7, 11\}$} & \centerline{$\{23\}$} & \centerline{$\{29\}$} & \centerline{$\{31\}$} & \centerline{$\{37\}$} & \centerline{$\{43\}$}\\ \hline
\end{tabular}
}

\end{table}

\begin{lemm}\label{Structure} Let $L$ be a simple non-abelian group with disconnected Gruenberg--Kegel graph such that $\pi_1(L)$ is not a clique and if $s(L)=2$, then $\pi_1(L)$ is not a clique with a unique edge deleted.  Let $H$ be a group such that $\Gamma(H)$ is isomorphic to $\Gamma(L)$ and let $$\Psi: \Gamma(L) \rightarrow \Gamma(H) \mbox{ be a graph isomorphism}.$$ Then $H$ has a unique compositional factor $S$ and the following statements hold{\rm:}

$(1)$ $\Psi(\pi_1(L))=\pi_1(H)$\footnote{Here we mean that $\Psi$ is an isomorphism between induced subgraphs on $\pi_1(L)$ and $\pi_1(H)$.}{\rm;}

$(2)$ there is an injective function $$f: \{2, \ldots, s(L)\} \rightarrow \{2, \ldots, s(S)\}$$ such that $\Psi(\pi_i(L))=\pi_{f(i)}(S)$, in particular, $|\pi_i(L)|=|\pi_{f(i)}(S)|$.

\smallskip

Moreover, if $s(L)=s(S)$, then there is a one-to-one correspondence between the multi-sets $\{|\pi_i(L)| \mid 2 \le i \le s(L)\}$ and $\{|\pi_j(S)| \mid 2 \le j \le s(S)\}$.
\end{lemm}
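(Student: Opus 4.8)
The plan is to feed $H$ into the Gruenberg--Kegel Theorem (Lemma~\ref{Gruenberg--Kegel theorem}) — legitimate since $\Gamma(H)\cong\Gamma(L)$ is disconnected — and then to use the two hypotheses on $\pi_1(L)$ to discard the Frobenius and $2$-Frobenius alternatives. First I would record the purely graph-theoretic facts forced by $\Psi$: $s(H)=s(L)=:s\ge 2$, and by Lemma~\ref{Table}$(1)$ each $\pi_i(L)$ with $i>1$ is a clique, so $\pi_1(L)$ is the \emph{unique} non-clique component of $\Gamma(L)$; hence $\Gamma(H)$ likewise has a unique non-clique component, and $\Psi$ carries $\pi_1(L)$ onto it. (This already reduces $(1)$ to showing that the non-clique component of $\Gamma(H)$ is $\pi_1(H)$.)

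Next I would eliminate cases $(1)$ and $(2)$ of Lemma~\ref{Gruenberg--Kegel theorem}. A Frobenius or $2$-Frobenius group has exactly two prime-graph components, so if $s\ge 3$ these are impossible and we are already in case $(3)$; thus assume $s=2$. If $H=K\rtimes C$ is Frobenius, then $K$ is nilpotent, so $\pi(K)$ is a clique, and $\Gamma(H)$ is the disjoint union of this clique with $\Gamma(C)$, which by the classification of Frobenius complements is a complete graph or a complete graph minus one edge. If $H$ is $2$-Frobenius with normal chain $A\trianglelefteq R\trianglelefteq H$, then $R/A$ is cyclic and $\Gamma(H)$ is the disjoint union of the two cliques $\pi(R/A)$ and $\pi(A)\cup\pi(H/R)$. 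In either case every component of $\Gamma(H)$ is a clique or a clique with one edge removed; since $\Psi$ matches the components of $\Gamma(H)$ with $\{\pi_1(L),\pi_2(L)\}$, this would force $\pi_1(L)$ to be a clique or a clique with one edge removed, contradicting the hypotheses (here the "one edge removed" clause is used precisely because $s=2$). Hence $H$ falls under case $(3)$: there is a nilpotent normal $\pi_1(H)$-subgroup $N\trianglelefteq H$ with $S\trianglelefteq A:=H/N\le Aut(S)$, $S$ non-abelian simple, $s(H)\le s(S)$, and $A/S$ a $\pi_1(H)$-group; since $N$ and $A/S$ are solvable, $S$ is the unique non-abelian composition factor of $H$.

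Now I would compare $\Gamma(H)$ and $\Gamma(S)$ on the small components. Because $N$ and $A/S$ are $\pi_1(H)$-groups, every prime outside $\pi_1(H)$ divides $|S|$; so for $i>1$ we have $\pi_i(H)\subseteq\pi(S)$ and no prime of $\pi_i(H)$ divides $|N|$ or $|A/S|$. Given $p,q\in\pi_i(H)$ with $pq\in\omega(H)$, a cyclic subgroup of order $pq$ meets $N$ trivially, hence embeds into $A$, and, projecting trivially onto $A/S$, lies in $S$; conversely $\omega(S)\subseteq\omega(A)\subseteq\omega(H)$. Thus $\Gamma(H)$ and $\Gamma(S)$ induce the same graph on $\pi_i(H)$; being connected, $\pi_i(H)$ lies inside a single component $\pi_j(S)$, and any $r\in\pi_j(S)\setminus\pi_i(H)$ adjacent in $\Gamma(S)$ to $\pi_i(H)$ would be adjacent to it in $\Gamma(H)$ — impossible — so $\pi_i(H)=\pi_j(S)$; and $2\in\pi_1(S)$ forces $j>1$. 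This yields an injection $f_0\colon\{2,\ldots,s\}\to\{2,\ldots,s(S)\}$ with $\pi_i(H)=\pi_{f_0(i)}(S)$. By Lemma~\ref{Table}$(1)$ applied to $S$, each $\pi_{f_0(i)}(S)$ is a clique, hence so is each $\pi_i(H)$ with $i>1$; since $\Gamma(H)\cong\Gamma(L)$ cannot have all components cliques, $\pi_1(H)$ is not a clique, so it is the unique non-clique component of $\Gamma(H)$ and $\Psi(\pi_1(L))=\pi_1(H)$, proving $(1)$. The component bijection induced by $\Psi$ then restricts to a bijection $\{\pi_i(L):i>1\}\to\{\pi_k(H):k>1\}$; composing it with $f_0$ produces the required $f$, with $\Psi(\pi_i(L))=\pi_{f(i)}(S)$ and, $\Psi$ being a graph isomorphism, $|\pi_i(L)|=|\pi_{f(i)}(S)|$, proving $(2)$. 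Finally, if $s(L)=s(S)$ then, since $s(L)=s(H)\le s(S)$, $f$ is an injection between sets of equal size, hence a bijection, which is exactly the asserted one-to-one correspondence of multi-sets.

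The delicate point is the exclusion of the Frobenius and $2$-Frobenius cases: it rests on knowing the precise shape of the prime graphs of such groups — that a Frobenius complement has prime graph equal to a complete graph or a complete graph minus one edge, and that a $2$-Frobenius group has prime graph equal to a disjoint union of two cliques (equivalently, independence number $2$). These are classical facts, but they are exactly what makes the two boundary hypotheses on $\pi_1(L)$ the correct ones; everything after the reduction to case $(3)$ is bookkeeping with connected components.
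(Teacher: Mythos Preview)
Your proof is correct and follows essentially the same route as the paper's: apply the Gruenberg--Kegel Theorem, eliminate the Frobenius and $2$-Frobenius alternatives, and then match components of $\Gamma(H)$ with components of $\Gamma(S)$ using that $\pi(N)\cup\pi(A/S)\subseteq\pi_1(H)$. The only real difference is in the exclusion step: the paper invokes \cite[Lemma~3, Proposition~1]{ZinMaz} as a black box, whereas you argue directly from the structure of Frobenius complements (that $\Gamma(C)$ is complete or complete minus one edge) and of $2$-Frobenius groups (that $\Gamma$ is a disjoint union of two cliques); these are exactly the facts encoded in the cited reference, so the two arguments are equivalent in substance, with yours being more self-contained and making transparent why the two hypotheses on $\pi_1(L)$ are precisely what is needed.
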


\begin{proof} By \cite[Lemma~3, Proposition~1]{ZinMaz}, $H$ is neither a Frobenius group nor a $2$-Frobenius group. Thus, by Lemma~\ref{Gruenberg--Kegel theorem}, the factor-group $H/F(H)$ is almost simple with socle $S$.
By Lemma~\ref{Table}, $\pi_i(L)$ is a clique for each $i$ with $2 \le i \le s(L)$ and $\pi_j(S)$ is also a clique for each $j$ with $2 \le j \le s(S)$. Moreover, by Lemma~\ref{Gruenberg--Kegel theorem}, $\pi(F(H)) \cup \pi(|H/F(H):S|) \subseteq \pi_1(H)$. Therefore, by Lemmas~\ref{NormalSeriesAdj} and~\ref{Table}, $\Psi(\pi_1(L))=\pi_1(H)$ and for each $i$ with $2\le i \le s(H)=s(L)$ there is a unique $j$ with $2\le j\le s(S)$ such that $\Psi(\pi_i(L))=\pi_{j}(S)$, i.\,e., there is an injective function $$f: \{2, \ldots, s(L)\} \rightarrow \{2, \ldots, s(S)\}$$ such that $\Psi(\pi_i(L))=\pi_{f(i)}(S)$, in particular, $|\pi_i(L)|=|\pi_{f(i)}(S)|$.
Now it is clear that if $s(L)=s(S)$, then there is a one-to-one correspondence between the multisets $$\{|\pi_i(L)| \mid 2 \le i \le s(L)\} \mbox{ and } \{|\pi_j(S)| \mid 2 \le j \le s(S)\}.$$

\end{proof}

\begin{lemm}[{\rm \cite{Va05}}]\label{vas}
Let $G$ be a finite group with $t(G)\geq3$ and $t(2,G)\geq2$. Then the following statements hold.
	
$(1)$ There exists a nonabelian simple group $S$ such that $S \unlhd \overline{G} = G/S(G) \le\operatorname{Aut}(S)$.
	
$(2)$ For every coclique $\rho$ of $\Gamma(G)$ of size at least three, at most one prime in $\rho$ divides the product $|K|\cdot|\overline{G}/S|$. In particular, $t(S)\geq t(G)-1$.
	
$(3)$ One of the following two conditions holds:	
	
$\mbox{ }$$\mbox{ }$$\mbox{ }$$(3.1)$ $S\cong A_7$ or $PSL_2(q)$ for some odd $q$, and $t(S)=t(2,S)=3$.
	
$\mbox{ }$$\mbox{ }$$\mbox{ }$$(3.2)$
Every prime $p\in\pi(G)$ nonadjacent to $2$ in $\Gamma(G)$ does not divide the product $|K|\cdot|\overline{G}/S|$. In particular, $t(2,S)\geq t(2,G)$.
\end{lemm}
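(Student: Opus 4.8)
The plan is to prove the Main Theorem by exploiting the structural results of Lemmas~\ref{Gruenberg--Kegel theorem}--\ref{Structure} together with the numerical rigidity that both ${}^2E_6(2)$ and the listed $E_8(q)$ enjoy in their Gruenberg--Kegel graphs. The common strategy is as follows. Given a group $H$ with $\Gamma(H)\cong\Gamma(L)$ for $L\in\{{}^2E_6(2), E_8(q)\}$, I first note that $\Gamma(L)$ is disconnected with many isolated (or clique) components, so by Lemma~\ref{Structure} the graph $\Gamma(L)$ satisfies the hypotheses (its first component $\pi_1(L)$ is not a clique, and in the $s=2$ case does not arise here). Thus $H/F(H)$ is almost simple with socle a simple group $S$, and there is an injective $f\colon\{2,\dots,s(L)\}\to\{2,\dots,s(S)\}$ with $\Psi(\pi_i(L))=\pi_{f(i)}(S)$ for the graph isomorphism $\Psi$. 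The goal is then to pin down $S$, and finally to rule out any proper extension so that $H\cong L$.

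For part $(i)$, the graph drawn has $8$ vertices and is disconnected with four components of sizes $5,1,1,1$, matching $s({}^2E_6(2))=4$ with $\pi_1=\{2,3,5,7,11\}$ and the three singletons $\{13\},\{17\},\{19\}$; the induced subgraph on $\pi_1$ is the drawn $5$-vertex graph (a clique on $\{2,3,5,7\}$ with $11$ joined only to $\{2,3\}$). First I would invoke Lemma~\ref{Table}: since $s(S)\ge s(L)=4$, the socle $S$ lies in Table~$1$. I would then eliminate every candidate in Table~$1$ by comparing the multiset of component sizes and the isomorphism type of the first-component subgraph. Concretely, among the $s=4$ rows the only ones whose nontrivial components are three singletons and whose $\pi_1$ has exactly five vertices (forming that specific non-clique on five vertices) are ${}^2E_6(2)$ itself; the sporadic and $E_8$ rows either have the wrong value of $|\pi_1|$, a different first-component adjacency pattern, or (for $E_8(q)$, $q\equiv2,3\pmod5$) nontrivial components that are not all singletons. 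Having forced $S\cong{}^2E_6(2)$, I would show $F(H)=1$ and $H=S$: any prime in $\pi(F(H))$ or in $|H/S|$ must lie in $\pi_1(H)$ and would alter adjacencies inside the first component, contradicting the rigid structure, and $\mathrm{Out}({}^2E_6(2))$ contributes no new vertices; a short Frobenius/Brauer-character argument (using $IBr_p$) rules out a nontrivial Fitting subgroup. This part essentially recovers Kondrat'ev's recognition of ${}^2E_6(2)$ but now only up to relabeling vertices.

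For part $(ii)$ the decisive input is Lemma~\ref{PrimesSmall}: precisely for $q\in\{2,3,4,5,7,8,9,17\}$ do we have $|\pi(q^2-1)|\le2$, which sharply limits the number of vertices in $\pi_1(E_8(q))$ and forces a recognizable shape. I would split on the congruence of $q$ modulo $5$. When $q\equiv0,1,4\pmod5$ we have $s(E_8(q))=5$ with four nontrivial singleton (or near-singleton) components, so $s(S)\ge5$ and Lemma~\ref{Table} leaves only the $s=5$ row, namely $E_8(u)$ with $u\equiv0,1,4\pmod5$, or $J_4$ with $s=6$; the component-size multiset and the explicit primitive-prime-divisor structure (tracked via $R_i(q)$ and Lemma~\ref{zsigm}) then forces $u=q$ after discarding $J_4$ by its fixed prime set. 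When $q\equiv2,3\pmod5$ we have $s=4$ and use the $E_8(q)$ row of the $s=4$ block, again reducing to $E_8(u)$ by the size multiset and then to $u=q$; here I would lean on the cited result of Maslova--Panshin--Staroletov that $\Gamma(S)=\Gamma(E_8(q))$ forces $S\cong E_8(u)$, upgrading it from the labeled to the unlabeled setting by checking that the relabeling $\Psi$ cannot permute components of different sizes. In both cases the small-$q$ hypothesis, through Lemma~\ref{PrimesSmall}, is exactly what makes $\pi_1$ small enough that the labeled recognition results apply verbatim.

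The main obstacle will be the elimination step in part $(ii)$: after reducing $S$ to the $E_8$ family I must recover the parameter $q$ purely from the \emph{unlabeled} graph. The danger is that two different prime powers $q$ and $u$ could yield Gruenberg--Kegel graphs that are abstractly isomorphic even though $\Gamma(E_8(q))\ne\Gamma(E_8(u))$ as labeled graphs. Overcoming this requires a careful vertex-count and adjacency-pattern analysis inside $\pi_1$, computing $|\pi(E_8(q))|$ and the edge structure from the cyclotomic factorization of $|E_8(q)|$ and the known adjacency criterion for primitive prime divisors $r_i(q)$; the restriction to the eight values of $q$ in Lemma~\ref{PrimesSmall} is precisely what keeps these counts small and pairwise distinct, so that the abstract isomorphism type determines $q$ uniquely. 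I expect the bulk of the work, and the place where the small-$q$ hypothesis is indispensable, to be verifying that no abstract graph isomorphism can identify $\Gamma(E_8(q))$ with $\Gamma(H)$ for $H$ built on a socle with a different defining parameter.
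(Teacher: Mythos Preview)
Your proposal does not address the statement you were asked to prove. The statement in question is Lemma~\ref{vas}, which is a structural result due to Vasil'ev (cited from \cite{Va05}) asserting that a finite group $G$ with $t(G)\ge 3$ and $t(2,G)\ge 2$ has an almost simple quotient $G/S(G)$, together with the coclique and $2$-coclique transfer statements. The paper does not prove this lemma at all; it is quoted from the literature and used as a black box. What you have written instead is an outline of a proof of the Main Theorem of the paper (recognizability of ${}^2E_6(2)$ and $E_8(q)$ for small $q$ by isomorphism type of Gruenberg--Kegel graph), in which Lemma~\ref{vas} is one of the tools.

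Even viewed as a sketch of the Main Theorem, your outline is substantially incomplete relative to the paper's actual argument. For ${}^2E_6(2)$ you propose to eliminate the Table~1 candidates purely by component sizes and the shape of $\pi_1$, but this is not enough: for instance $Ly$ has the same component-size multiset $(5,1,1,1)$, and the paper has to invoke unisingularity and Hall--Higman-type results (Lemmas~\ref{Unisingular}, \ref{TiepZalThm}, \ref{BrChar}, \ref{PSL2qODD}, \ref{MazurovLemma}) to rule out $Ly$, $M_{22}$, $J_1$, $O'N$ and ${}^2B_2(q)$ after accounting for a possibly nontrivial Fitting subgroup $N$ that can add vertices and edges. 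For the $E_8(q)$ cases with $q\in\{3,7,8,17\}$, the paper's elimination of other $E_8(q_1)$ socles relies on explicit degree computations in $\Gamma(L)$ (Lemma~\ref{OrdersAndGraphs}), the bound on vertices adjacent to everything in $\pi_1$ (Lemma~\ref{Primes20}), and the key fact that a nontrivial nilpotent kernel forces $R_{24}(q_1)\subset\pi_1(G)$ (Lemma~\ref{E8nontrivK_R24}); your sketch does not identify these ingredients. If your task is genuinely to prove Lemma~\ref{vas}, you need an entirely different argument (the one in \cite{Va05}); if it is the Main Theorem, you need to engage with the case analysis and the representation-theoretic lemmas just listed.
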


The next assertion is well-known and easy-proving. The proof of this assertion can be found, for example, in \cite[Lemma~1.13]{MasPansStar}.

\begin{lemma}\label{Eigenvector1} Let $G$ be a group, $g\in G$ be an element of order $r$, and $\phi$  a non-trivial irreducible representation of $G$ on a nonzero vector space $V$. If the minimum polynomial degree of $\phi(g)$ equals to $r$, then $g$ fixes in $V$ a nonzero vector.
\end{lemma}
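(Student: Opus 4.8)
The plan is to forget the ambient representation theory and to argue purely about the single linear operator $A:=\phi(g)$ acting on $V$ over the underlying field $F$. In fact neither the irreducibility nor the non-triviality of $\phi$ will be used: the only input beyond the hypothesis on the minimal polynomial is the relation $g^r=1$. First I would record that, since $\phi$ is a representation, $A^r=\phi(g^r)=\phi(1_G)=I$, where $I$ denotes the identity operator on $V$; hence $A$ is annihilated by $x^r-1$, and so the monic minimal polynomial $m(x)$ of $A$ over $F$ divides $x^r-1$ in $F[x]$.

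Next I would exploit the degree hypothesis. Since $\deg m=r=\deg(x^r-1)$ and $m(x)\mid x^r-1$ with both polynomials monic, they must coincide, so $m(x)=x^r-1$. The key algebraic fact is then the factorization $x^r-1=(x-1)(x^{r-1}+x^{r-2}+\dots+1)$, valid in $F[x]$ for every field $F$, which shows that $(x-1)$ divides $m(x)$. To extract a fixed vector I would write $m(x)=(x-1)h(x)$ with $\deg h=r-1<\deg m$; by minimality of $m$ the operator $h(A)$ is nonzero, so some $w\in V$ gives $v:=h(A)w\neq 0$, and then $(A-I)v=m(A)w=0$, i.e. $\phi(g)v=v$ with $v\neq 0$. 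Thus $g$ fixes the nonzero vector $v$, as required.

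The main obstacle I anticipate is merely to keep the argument characteristic-free. When $r$ coincides with the characteristic of $F$ one has $x^r-1=(x-1)^r$, the operator $A$ is in general not diagonalizable, and counting eigenvalues of $A$ over $\overline{F}$—the route one might first try, observing that all $r$-th roots of unity, and in particular $1$, must then occur as eigenvalues—is unavailable. This is precisely why I would phrase the final step through divisibility of the minimal polynomial by $(x-1)$ rather than through eigenvalues: the factorization $x^r-1=(x-1)(x^{r-1}+\dots+1)$ holds over every field, so the three steps above settle all characteristics uniformly and no separability or splitting-field hypothesis is needed.
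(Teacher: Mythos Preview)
Your argument is correct and complete. The paper does not actually prove this lemma: it simply declares the assertion ``well-known and easy-proving'' and defers to \cite[Lemma~1.13]{MasPansStar} for a proof. Your explicit derivation---observing that $A^r=I$ forces the minimal polynomial of $A$ to divide $x^r-1$, that the degree hypothesis then pins it down as $x^r-1$ exactly, and that the factor $x-1$ yields a nonzero fixed vector via $v=h(A)w$---is precisely the standard proof one would expect behind that citation. Your remark that neither irreducibility nor non-triviality of $\phi$ is needed is also correct and worth noting, as is your care to phrase the final step through divisibility by $x-1$ rather than through eigenvalue-counting, which keeps the argument uniform across all characteristics.
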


\begin{lemm}[{\rm \cite[Theorem 1.1]{TiepZal}}]\label{TiepZalThm}
Let $G$ be one of the groups ${^2}B_2(q)$, where $q > 2$, ${^2}G_2(q)$, where $q>3$, ${^2}F_4(q)$, $G_2(q)$, ${^3}D_4(q)$. Let $g \in G$ an element of prime power order coprime to $q$. Let $\phi$ be a non-trivial irreducible representation of $G$ over a field $F$ of characteristic $l$ coprime to $q$. Then the minimum polynomial degree of $\phi(g)$
equals $|g|$, unless possibly when $G = {^2}F_4(8)$, $l= 3$, $p=109$ and $\phi(1) < 64692$.
\end{lemm}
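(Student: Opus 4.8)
The plan is to translate the minimum-polynomial condition into a statement about eigenvalue multiplicities and then verify it by a dichotomy: a uniform inequality that settles representations of large degree, and a finite direct check for the remaining small ones. Write $n=|g|=r^a$ and $C=\langle g\rangle$, and let $\varphi$ be the $l$-Brauer character of $\phi$. First treat the principal case $l\neq r$, when $\phi(g)$ is semisimple. Fixing a primitive $n$-th root of unity $\zeta$ in the relevant field, the multiplicity of the eigenvalue $\zeta^{j}$ of $\phi(g)$ is $m_j=\frac1n\sum_{i=0}^{n-1}\varphi(g^i)\zeta^{-ij}$, so the minimum polynomial of $\phi(g)$ has degree $n=|g|$ precisely when $m_j>0$ for every $j$, i.e. when every linear character of $C$ occurs in $\phi|_C$. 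Since $m_j\ge \frac1n\bigl(\dim\phi-\sum_{i=1}^{n-1}|\varphi(g^i)|\bigr)$, it suffices to establish the single inequality $\dim\phi>\sum_{i=1}^{n-1}|\varphi(g^i)|$. When $l=r$ the element $\phi(g)$ is unipotent of order dividing $r^a$; here the degree of the minimum polynomial is the size of the largest Jordan block, and one argues separately that faithfulness of $\phi$ on $C$ together with the known bounds on Jordan block sizes of unipotent elements of order $r^a$ forces a block of full size $r^a$.

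The main step is to verify $\dim\phi>\sum_{i=1}^{n-1}|\varphi(g^i)|$ for all but a bounded, explicitly listed set of pairs $(\phi,g)$, and I would feed in two ingredients. First, the minimal degree of a nontrivial cross-characteristic irreducible representation of each of the groups $^2B_2(q)$, $^2G_2(q)$, $G_2(q)$, $^3D_4(q)$, $^2F_4(q)$ is known exactly (Land\'azuri--Seitz--Zalesski-type lower bounds, refined by the explicit classification of low-dimensional representations), giving a strong lower bound on $\dim\phi$. Second, upper bounds on $|\varphi(g^i)|$ at nonidentity elements of prime-power order coprime to $q$ are available from the generic ordinary character tables of these families (Suzuki, Ward, Chang--Ree, Enomoto, Deriziotis--Michler, Malle) together with the known decomposition matrices, so that $\sum_{i=1}^{n-1}|\varphi(g^i)|$ is controlled in terms of $q$ and $n$. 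For each family these two bounds combine to yield the inequality once $\dim\phi$ exceeds an explicit threshold, disposing of all representations of large degree simultaneously.

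It then remains to inspect the finitely many small-degree irreducibles of each group together with the finitely many element orders for which the uniform inequality is not yet conclusive. For these I would compute the multiplicities $m_j$ directly from the character tables and decomposition matrices, running class by class over the conjugacy classes of elements of prime-power order coprime to $q$. This is exactly the range in which the stated exception appears: for $G={}^2F_4(8)$ with $l=3$, an element $g$ whose order involves $p=109$ as a primitive prime divisor, and a representation with $\phi(1)<64692$, the inequality fails and the direct computation exhibits a missing eigenvalue, so the exceptional clause cannot be removed.

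The hard part will be twofold. Conceptually, the Suzuki, Ree and $^2F_4$ groups do not arise from Deligne--Lusztig induction on a connected reductive group in the usual way, so the required character-value bounds must be extracted from their explicit generic character data rather than from uniform geometric estimates, and one must ensure the resulting small list is genuinely finite and completely enumerated. Quantitatively, the tight regime is that of an element $g$ of large prime order $r=r_i(q)$ — a primitive prime divisor in the sense of Lemma~\ref{zsigm} — where $n=r$ is comparable to $\dim\phi$; there the inequality $\dim\phi>\sum_{i=1}^{n-1}|\varphi(g^i)|$ is nearly an equality and its verification hinges on pinning down the few nonzero values $\varphi(g^i)$ precisely. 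It is exactly this delicate balance that produces the ${}^2F_4(8)$ exception and that must be resolved case by case, after which the passage to fixed vectors via Lemma~\ref{Eigenvector1} is immediate.
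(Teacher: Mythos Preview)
The paper gives no proof of this lemma: it is stated as a direct citation of \cite[Theorem~1.1]{TiepZal} and used as a black box. There is nothing in the paper to compare your proposal against, because the authors simply import the result from Tiep and Zalesski.

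Your outline is a plausible sketch of how the original Tiep--Zalesski argument proceeds (eigenvalue-multiplicity reformulation, Land\'azuri--Seitz type lower bounds versus character-value upper bounds, and a finite residual check), so as a roadmap it is reasonable. One point is wrong, however: you assert that for ${}^2F_4(8)$, $l=3$, $p=109$, $\phi(1)<64692$ ``the direct computation exhibits a missing eigenvalue, so the exceptional clause cannot be removed.'' The wording of the theorem is ``unless \emph{possibly} when\ldots'', which signals that this case was left open in \cite{TiepZal}, not that a genuine counterexample was found. Claiming a definitive failure here overstates what is known; the correct statement is that the method does not settle this residual case, and the conclusion may or may not hold there.
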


\begin{lemm}[{\rm \cite[Lemma~4]{DoJaLu}}]\label{BrChar} Let $N$ be a nontrivial normal subgroup of a group $G$, such that $G/N \cong S$, with $S$ a simple group. If there is an element $g \in G$ of prime order that acts fixed-point-freely on $N$ then, for
every prime $r$ dividing $|N|$, there exists some $\chi \in IBr_r(S)$ such that $[\chi_T, 1_T]=0$, where $T=\langle gN \rangle$.
%Let $G$ be a finite simple group, $F$ a field of characteristic $p > 0$, $V$ an absolute irreducible $GF$-module, and $\beta$ a Brauer character of $V$. If $g \in G$ is an element of prime order distinct from $p$, then $$ dim C_V (g) = (\beta_{\langle g \rangle}, 1_{\langle g \rangle}) = \frac{1}{|g|}\sum_{x \in {\langle g \rangle}} \beta(x).$$
\end{lemm}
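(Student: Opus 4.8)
The plan is to realise the required Brauer character geometrically, as arising from a suitable $r$-modular section of $N$ on which $g$ acts without nonzero fixed points, after first arranging that this section is a genuine module for $S=G/N$. I would begin with two preliminary reductions. Since $g$ has prime order $p$ and acts on $N$ by conjugation with $C_N(g)=1$, we have $g\notin N$ (otherwise $1\neq g\in C_N(g)$), so $gN$ is a nontrivial element of $S$ and $T=\langle gN\rangle$ has order $p$. The automorphism of $N$ induced by $g$ is nontrivial (else $C_N(g)=N\neq 1$) and of prime order $p$, so by the Thompson theorem~\cite[Theorem~1]{Thompson} the group $N$ is nilpotent.

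Now fix a prime $r\mid |N|$. Nilpotency gives $O_r(N)\neq 1$, and since $O_r(N)$ is characteristic in $N\trianglelefteq G$ it is normal in $G$. Choose a minimal normal subgroup $P$ of $G$ lying in $O_r(N)$; as a minimal normal $r$-subgroup it is elementary abelian, hence an irreducible $\mathbb{F}_r[G]$-module. I claim $N$ acts trivially on $P$, so that $P$ is in fact an $\mathbb{F}_r[S]$-module. Indeed, $Z(N)$ is characteristic in $N$ hence normal in $G$, so $P\cap Z(N)$ is normal in $G$; as $P\trianglelefteq N$ is nontrivial and $N$ is nilpotent, $P\cap Z(N)\neq 1$, whence $P\cap Z(N)=P$ by minimality of $P$. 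Thus $P\le Z(N)$ and $N$ centralises $P$. Since $N$ acts trivially, the $G$-submodules and the $S$-submodules of $P$ coincide, so $P$ is an irreducible $\mathbb{F}_r[S]$-module.

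Next I would verify coprimality and the fixed-point-free behaviour of $gN$. If $p=r$, then $P\langle g\rangle$ would be a nontrivial $r$-group in which the nontrivial normal $r$-subgroup $P$ meets the centre, producing an element of $P$ fixed by $g$ and contradicting $C_P(g)\le C_N(g)=1$; hence $p\neq r$ and $T$ is an $r'$-group. Because $N$ acts trivially on $P$, the element $gN$ acts on $P$ exactly as $g$ does, so $C_P(gN)=C_P(g)=1$. After extending scalars to $\overline{\mathbb{F}_r}$, the restriction to the $r'$-group $T$ of any module is semisimple, and the multiplicity of the trivial $T$-module equals the dimension of the $gN$-fixed space; for $P$ this multiplicity is $\dim C_P(gN)=0$.

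Finally I would extract the Brauer character. Picking an absolutely irreducible constituent $W$ of $P\otimes_{\mathbb{F}_r}\overline{\mathbb{F}_r}$ as an $\overline{\mathbb{F}_r}[S]$-module, let $\chi\in IBr_r(S)$ be the Brauer character it affords. Since $C_W(gN)\le C_{P\otimes\overline{\mathbb{F}_r}}(gN)=0$ and $T$ is an $r'$-group, the restriction $\chi_T$ is an ordinary character of $T$ with $[\chi_T,1_T]=\dim_{\overline{\mathbb{F}_r}}C_W(gN)=0$, as required. The crux of the argument — and the step I would guard most carefully — is the descent to an $S$-module: an arbitrary $r$-section of $N$ on which $g$ is fixed-point-free is only a module for $G$ modulo the part of $N$ acting trivially, and need not factor through $G/N$. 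Passing to a minimal normal subgroup of $G$ inside $O_r(N)$, which nilpotency forces to be central in $N$, is precisely what makes the section a genuine $\mathbb{F}_r[S]$-module and lets the fixed-point-free condition be read off as the vanishing of a trivial-character multiplicity.
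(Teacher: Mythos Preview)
The paper does not give its own proof of this lemma: it is quoted verbatim from \cite[Lemma~4]{DoJaLu} and used as a black box. Your argument is therefore not being compared against anything in the present paper, but it is a correct self-contained proof of the cited result. The key steps --- Thompson's theorem to obtain nilpotency of $N$, passage to a minimal normal subgroup $P\le O_r(N)$ which nilpotency forces to lie in $Z(N)$ so that the $G$-action on $P$ factors through $S$, the observation $p\neq r$ via the centre of the $r$-group $P\langle g\rangle$, and the extraction of an absolutely irreducible $\overline{\mathbb{F}_r}[S]$-constituent whose $T$-fixed space is trivial --- are all sound. One small point worth making explicit for the reader: the semisimplicity of $P\otimes_{\mathbb{F}_r}\overline{\mathbb{F}_r}$ (so that your constituent $W$ really is a submodule and $C_W(gN)\le C_{P\otimes\overline{\mathbb{F}_r}}(gN)$) holds because $\mathbb{F}_r$ is perfect; alternatively one can simply take $W$ to be any composition factor and note that fixed-point-freeness of $gN$ passes to quotients as well as submodules since $|gN|$ is coprime to $r$.
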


Let $S$ be a finite simple group of Lie type in characteristic $p$. Let $A$ be any abelian $p$-group with an $S$-action. An element $s \in S$ is said to be {\it unisingular} on $A$ if $s$ has a (nonzero) fixed point on $A$. The group $S$ is said to be {\it unisingular} if every element $s \in S$ acts unisingularly on every finite abelian $p$-group $A$ with an $S$-action.
Denote by $PSL^{\varepsilon}_n(q)$, where $\varepsilon \in\{+,-\}$, the group $PSL_n(q)$ if $\varepsilon=1$ and
$PSU_n(q)$ if $\varepsilon=-1$. Similarly, $E^{\varepsilon}_6(q)$ denotes the simple group $E_6(q)$ if $\varepsilon=1$ and ${}^2E_6(q)$ if $\varepsilon=-1$.

\begin{lemm}[{\rm\cite[Theorem 1.3]{GuTi03}}]\label{Unisingular} A finite simple group $S$ of Lie type of characteristic $p$ is unisingular if
and only if $S$ is one of the following{\rm:}

$(i)$ $PSL_n^\varepsilon(p)$ with $\varepsilon \in \{+,-\}$ and $n$ divides  $p-\varepsilon 1${\rm;}

$(ii)$ $P\Omega_{2n+1}(p)$, $PSp_{2n}(p)$ with $p$ odd{\rm;}

$(iii)$ $P\Omega_{2n}^\varepsilon (p)$ with $\varepsilon \in \{+,-\}$, $p$ odd, and $\varepsilon =(-1)^{n(p-1)/2}${\rm;}

$(iv)$ ${^2}G_2(q)$, $F_4(q)$, ${^2}F_4(q)$, $E_8(q)$ with $q$ arbitrary{\rm;}

$(v)$ $G_2(q)$ with $q$ odd{\rm;}

$(vi)$ $E_6^\varepsilon(p)$ with $\varepsilon \in \{+,-\}$ and $3$ divides $p-\varepsilon 1${\rm;}

$(vii)$ $E_7(p)$ with $p$ odd.
\end{lemm}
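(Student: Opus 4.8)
The plan is to restate unisingularity as a condition on the weights of irreducible modules in the defining characteristic and then to verify that condition group by group. First I would cut down the modules and elements that must be tested. If $A$ is an abelian $p$-group with $S$-action, then a nonzero $s$-fixed point exists in $A$ iff one exists in the elementary abelian socle $A[p]$, so it suffices to consider $\mathbb{F}_p[S]$-modules $V$. Writing $s=s_p s_{p'}$ for the commuting $p$- and $p'$-parts, decomposing $V\otimes\overline{\mathbb{F}}_p$ into eigenspaces of the semisimple part $s_{p'}$, and using that the unipotent $s_p$ has only the eigenvalue $1$, one checks that $s$ fixes a nonzero vector iff $s_{p'}$ has eigenvalue $1$; and since eigenvalue-$1$ multiplicities of a $p'$-element are additive along a composition series (Maschke for $\langle s_{p'}\rangle$), $S$ is unisingular iff every semisimple $t\in S$ has eigenvalue $1$ on every irreducible $\overline{\mathbb{F}}_p[S]$-module.

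Next I would pass to the algebraic group. Let $\mathbf{G}$ be the simply connected cover with Frobenius endomorphism $F$, let $G=\mathbf{G}^F$ with centre $Z$, so that $S=G/Z$; by Steinberg's theorem the irreducible $\overline{\mathbb{F}}_p[S]$-modules are the $L(\lambda)$ on which $Z$ acts trivially. Taking $t$ in a maximal torus, its eigenvalues on $L(\lambda)$ are the values $\mu(t)$ as $\mu$ runs over the weights of $L(\lambda)$, so $S$ is unisingular iff for every admissible $\lambda$ and every semisimple $t$ some weight $\mu$ satisfies $\mu(t)=1$. The governing sufficient condition is that $0$ be a weight of $L(\lambda)$, for then $\mu=0$ gives $\mu(t)=1$ for all $t$. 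By Steinberg's tensor product theorem $L(\lambda)=\bigotimes_{i\ge 0}L(\lambda_i)^{[p^i]}$ with $p$-restricted $\lambda_i$, and $0$ is a weight of the product as soon as it is a weight of each factor; by Premet's theorem, for $p$ outside a short list of special primes the weights of $L(\lambda_i)$ and of the Weyl module $V(\lambda_i)$ coincide, so $0$ is a weight of $L(\lambda_i)$ precisely when $\lambda_i$ lies in the root lattice $Q$.

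This reduces the sufficiency half to a lattice-and-centre computation matching the stated hypotheses. For the trivial-centre types $G_2,F_4,E_8,{}^2F_4,{}^2G_2$ one has $P=Q$ (weight lattice equal to root lattice), so every $\lambda_i\in Q$ automatically and $0$ is a weight for arbitrary $q$ --- except in the special characteristics, where the minimal module can shed its zero weight; the hypothesis "$q$ odd" for $G_2$ (together with the intrinsic $p=3$ for ${}^2G_2$ and $p=2$ for ${}^2F_4,F_4$) isolates exactly the cases needing direct verification against Lübeck's tables of weight multiplicities, with $p=2$ for $G_2$ genuinely excluded. For the classical and $E_6,E_7$ types the hypothesis $q=p$ removes all Frobenius twists, so triviality of $Z$ on $L(\lambda)$ forces $\lambda\in Q$ outright, while the congruences $n\mid p-\varepsilon 1$, the sign condition on $P\Omega^\varepsilon_{2n}$, and the requirement $p$ odd make $Z$ as large as the full centre and keep $p$ non-special, so that every $S$-module has $\lambda\in Q$ and hence a zero weight.

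For the converse I would show that every group outside the list carries a pair $(t,L(\lambda))$ with $0$ absent from the weights and $\mu(t)\neq 1$ for all $\mu$. Over a non-prime field $q=p^k$ the Frobenius-twisted products provide $S$-modules whose factors lie outside $Q$ even though the total central character is trivial; the prototype is $PSL_2(p^2)$ with $L(1)\otimes L(1)^{[p]}$, of weights $\pm(p-1),\pm(p+1)$, on which an element of order $r_4(p)$ (a primitive prime divisor of $p^4-1$, available by Lemma~\ref{zsigm}) has no eigenvalue $1$. The same construction, choosing a maximal torus and an element of order some $r_i(q)$, disposes of the wrong-congruence cases (a smaller centre admits a module with $\lambda\notin Q$, e.g. a minuscule or natural module, on which a regular semisimple or Singer-type element is fixed-point-free) and of the excluded parities (e.g. the $6$-dimensional module of $G_2(2^k)$). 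The main obstacle is the uniform control of the zero weight in small defining characteristic: precisely where $V(\lambda)\neq L(\lambda)$ the clean root-lattice criterion breaks down, so the technical heart of the argument is the explicit determination, via Premet's theorem supplemented by Lübeck's tables, of which irreducible modules retain a zero weight at the special primes, together with the careful bookkeeping of central characters under Frobenius twists that decides which $L(\lambda)$ are genuine $S$-modules; constructing the fixed-point-free semisimple elements in the negative cases, with orders prescribed by Zsigmondy primes so that no weight evaluates to $1$, is the other case-heavy ingredient.
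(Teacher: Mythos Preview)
The paper does not give a proof of this lemma at all: it is quoted as \cite[Theorem~1.3]{GuTi03} and used as a black box, so there is no ``paper's own proof'' to compare against.

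That said, your outline is essentially a sketch of the Guralnick--Tiep argument itself: reduce to semisimple elements acting on irreducible $\overline{\mathbb{F}}_p[S]$-modules, pass to weights of the simply connected algebraic group, observe that the presence of the zero weight forces an eigenvalue~$1$, and then use Premet's theorem to reduce the zero-weight question to the root-lattice condition $\lambda\in Q$ in good characteristic, with the central-character bookkeeping and the hypothesis $q=p$ doing the rest. The converse, producing for each excluded group a module without zero weight together with a semisimple element avoiding eigenvalue~$1$ (often via a Zsigmondy prime), is also the shape of their negative direction. A couple of points where the actual proof is more delicate than your sketch suggests: the bad-prime cases are not confined to $G_2$ at $p=2$ --- for instance $F_4$ has bad primes $2$ and $3$, yet $F_4(q)$ is claimed unisingular for all $q$, so the ``direct verification against L\"ubeck's tables'' step is substantial there as well; and in the converse direction the construction of fixed-point-free semisimple elements on twisted tensor products requires more than just picking a single $r_i(q)$, since one must simultaneously avoid every weight of the module, which in practice means a careful choice of torus and order. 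None of this is a gap in your plan, only an acknowledgement that the case analysis is the real work, exactly as you say in your final sentence.
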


\begin{lemm}[{\rm \cite[Proposition~3.2]{Stewart}}]\label{PSL2qODD} Let $G$ be a group, $H\unlhd G$, $G/H \cong PSL_2(q)$, where $q>5$ is odd.
If $C_H(t)=1$ for some element $t \in G\setminus H$ such that $|t|=3$, then $H=1$.
\end{lemm}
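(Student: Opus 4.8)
The plan is to assume $H \neq 1$ and force a contradiction through the modular representation theory of $S := PSL_2(q)$. Write $\bar t := tH$. Since $t \in G \setminus H$ and $t^3 = 1$, the image $\bar t$ is a nontrivial element of $S$ whose order divides $3$, hence $|\bar t| = 3$; and since $C_H(t) = 1$, the element $t$ (of prime order $3$) acts fixed-point-freely on $H$. If the conjugation action of $t$ on $H$ were trivial we would get $C_H(t) = H$ and hence $H = 1$; so we may assume this action is a genuine fixed-point-free automorphism of order $3$ (by the Thompson theorem \cite[Theorem~1]{Thompson} the subgroup $H$ is then even nilpotent, though the argument below will not need this).

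The engine of the proof is Lemma~\ref{BrChar}, applied directly with $N = H$, $g = t$, and $S = G/H = PSL_2(q)$, which is simple because $q > 5$. Choosing any prime $r$ dividing $|H|$, the lemma produces an irreducible $r$-Brauer character $\chi \in IBr_r(S)$ with $[\chi_T, 1_T] = 0$, where $T = \langle \bar t\rangle \cong \mathbb{Z}/3\mathbb{Z}$. Since $[\mathbf{1}_T, 1_T] = 1$, this $\chi$ is necessarily nontrivial. The whole problem therefore reduces to the following purely character-theoretic statement: \emph{for every prime $r$ and every nontrivial $\chi \in IBr_r(PSL_2(q))$ with $q > 5$ odd, the multiplicity $[\chi_T, 1_T]$ of the trivial character in $\chi|_T$ is strictly positive} (equivalently, $\bar t$ has a nonzero fixed vector in the module affording $\chi$). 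Once established, this contradicts the output of Lemma~\ref{BrChar} for the chosen $r$, and hence forces $H = 1$.

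To prove the reduced statement I would split on the characteristic. When $r = 3$, the group $T$ is an $r$-group, so every $r$-Brauer character of $T$ is trivial and $[\chi_T, 1_T] = \chi(1) > 0$ automatically. When $r \neq 3$, the element $\bar t$ is $r$-regular, so it acts semisimply with cube-root-of-unity eigenvalues, and using that $\bar t$ is real in $PSL_2(q)$ (so that $\chi(\bar t) = \chi(\bar t^{2})$ is real) one has the exact formula
\[
[\chi_T, 1_T] = \tfrac{1}{3}\bigl(\chi(1) + \chi(\bar t) + \chi(\bar t^{2})\bigr) = \tfrac{1}{3}\bigl(\chi(1) + 2\chi(\bar t)\bigr).
\]
Thus $[\chi_T, 1_T] = 0$ is equivalent to the borderline equality $\chi(\bar t) = -\chi(1)/2$, i.e. to all eigenvalues of $\bar t$ being primitive cube roots of unity. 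For $r$ equal to the defining characteristic $\ell$, I would use the Steinberg tensor product description of $IBr_\ell(PSL_2(q))$ as twisted tensor products $\bigotimes_j (S^{k_j})^{(\ell^{j})}$ of symmetric powers of the natural module: the requirement that the central involution of $SL_2(q)$ act trivially forces $\sum_j k_j$ to be even, which excludes the obstruction coming from the (twisted) natural module (in particular the $2$-dimensional module $S^1$ cannot occur), and a direct eigenvalue count on an order-$3$ lift $\hat t \in SL_2(q)$ then gives $[\chi_T,1_T] > 0$. For $r$ coprime to $\ell$ I would read off $\chi(\bar t)$ from the known Brauer character table of $PSL_2(q)$: for the characters of degree $q$ and $q \pm 1$ one has $|\chi(\bar t)| \le 2 < \chi(1)/2$ precisely because $q > 5$, so the borderline equality cannot hold.

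The step I expect to be the main obstacle is the cross-characteristic analysis of the two ``half-size'' characters of degree $(q \pm 1)/2$. Here $\chi(\bar t)$ is governed by a Gauss-sum expression and can have absolute value of order $\sqrt{q}$, so the crude bound $|\chi(\bar t)| \le 2$ is unavailable and one must instead show that the precise value never equals $-(q \pm 1)/4$. I would handle this by combining an explicit evaluation of these characters at an order-$3$ element (distinguishing whether $\bar t$ lies in the split or non-split maximal torus, which depends on $q \bmod 3$) with the observation that $-(q\pm1)/4$ is not of the arithmetic form taken by these values, and by disposing of the finitely many small odd $q > 5$ by direct inspection of their Brauer character tables. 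Together with the defining-characteristic and $r = 3$ cases, this yields $[\chi_T, 1_T] > 0$ for every nontrivial irreducible Brauer character, contradicting Lemma~\ref{BrChar} and completing the proof that $H = 1$.
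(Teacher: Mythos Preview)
The paper does not prove this lemma at all: it is quoted verbatim as \cite[Proposition~3.2]{Stewart} and used as a black box. So there is no ``paper's own proof'' to compare against beyond the citation to Stewart's 1973 argument, which predates the modular character technology you are invoking and proceeds by a direct structural analysis of strongly self-centralizing $3$-centralizers.

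Your route through Lemma~\ref{BrChar} is a genuinely different and more modern strategy, and the reduction is set up correctly: if $H\neq 1$ and $r\mid |H|$, Lemma~\ref{BrChar} produces a nontrivial $\chi\in IBr_r(PSL_2(q))$ with $[\chi_T,1_T]=0$, and you aim to show no such $\chi$ exists. Two points deserve tightening. First, your formula $[\chi_T,1_T]=\tfrac{1}{3}(\chi(1)+2\chi(\bar t))$ uses $\bar t\sim\bar t^{-1}$ in $PSL_2(q)$; this is fine when $\bar t$ is semisimple, but fails when $3\mid q$ and $q\equiv 3\pmod 4$ (the two unipotent classes are swapped by inversion only when $-1$ is a square). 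You should keep the symmetric form $\tfrac{1}{3}(\chi(1)+\chi(\bar t)+\chi(\bar t^{2}))$ in that case. Second, the two places you flag as sketches --- the defining-characteristic eigenvalue count on Steinberg tensor products, and the half-size characters of degree $(q\pm 1)/2$ in cross characteristic --- are exactly where the work lies, and neither is carried out. Both are doable (the first by a parity/pairing argument on the exponents $k_j$, the second from the explicit Brauer tables of $PSL_2(q)$), but as written the proposal is a correct plan rather than a complete proof. If you only need the lemma as input, citing Stewart as the paper does is the efficient choice; if you want an independent proof along your lines, the half-size case is the one to write out in full.
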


\begin{lemm}[{\rm \cite[Lemma~1]{Mazurov}}]\label{MazurovLemma} Let $G$ be a group, $N\unlhd G$, and $G/N$ be a Frobenius group with kernel $F$ and a cyclic complement $C$. If $(|F|,|N|)=1$ and $F \not \le NC_G(N)/N$, then $s|C|\in \omega(G)$ for each $s \in \pi(N)$.

\end{lemm}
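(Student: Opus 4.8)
The plan is to localise the problem to a single Sylow subgroup of $N$, convert the relevant action into a genuine coprime action of a Frobenius group, and then manufacture the required element order via a fixed-point argument for the complement. Throughout I write $\overline{G}=G/N$, so $\overline{G}=F\rtimes C$ with $F$ nilpotent (it is the Frobenius kernel, hence nilpotent by Thompson's theorem \cite[Theorem~1]{Thompson}) and $C$ cyclic.

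First I would unpack the hypothesis $F\not\le NC_G(N)/N$. The subgroup $\overline{C}=NC_G(N)/N$ is normal in $\overline{G}$, and in a Frobenius group every normal subgroup either contains the kernel or is contained in it; since $F\not\le\overline{C}$ we must have $\overline{C}\le F$. Hence $C_G(N)\le K$, where $K$ is the full preimage of $F$ in $G$, and $\overline{G}/\overline{C}$ is again a Frobenius group (a quotient of a Frobenius group by a normal subgroup lying inside the kernel) with nontrivial kernel $F/\overline{C}$ and complement isomorphic to $C$. Because $(|F|,|N|)=1$ and $F$ is solvable, the Schur--Zassenhaus theorem \cite[Theorem~6.2.1]{Gorenstein} splits $K=N\rtimes F_0$ with $F_0\cong F$; and $\overline{C}\le F$ forces $F_0\not\le C_G(N)$, i.e.\ $F_0$ acts nontrivially on $N$ by conjugation. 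This coprime, nontrivial action of the kernel is the only handle on $N$ that we are given.

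Next I fix $s\in\pi(N)$ and pass to an $F_0$-invariant Sylow $s$-subgroup $P$ of $N$, available by coprimality, and then to the elementary abelian section $V=P/\Phi(P)$. Using the Frattini argument $G=N\,N_G(P)$ I choose a lift $y\in N_G(P)$ of a generator of $C$, and by a standard Glauberman-type coprime-invariance argument I may take $F_0$ to be $\langle y\rangle$-invariant, so that $\Gamma=F_0\rtimes\langle y\rangle\le N_G(P)$ is a Frobenius group with kernel $F_0$ and complement $\langle y\rangle\cong C$ acting coprimely on $V$. The decisive step is then the classical fixed-point theorem for coprime Frobenius actions: if $\Gamma=F_0C$ acts coprimely on a nonzero module $V$ with $C_V(F_0)\ne V$, then the complement has a nonzero fixed point, $C_V(C)\ne 0$. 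This follows from the character identity $\dim C_V(C)=\tfrac{1}{|C|}\sum_{c\in C}\dim C_V(c)$ together with the fixed-point-free action of $C$ on $F_0$. Lifting through $\Phi(P)$ by coprimality yields an element $u\in P\le N$ of order $s$ centralised by $y$.

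Finally I would realise the element order: arranging $y$ to have order exactly $|C|$ and coprime to $s$ (routine once the $s'$-part of the preimage of $C$ is isolated via Hall and Schur--Zassenhaus arguments), the commuting elements $u$ and $y$ with $\langle u\rangle\cap\langle y\rangle=1$ produce $uy$ of order $s|C|\in\omega(G)$, and since $s$ was arbitrary the conclusion follows. The main obstacle is exactly the fixed-point step together with the bookkeeping that makes it apply to \emph{every} $s\in\pi(N)$: one must ensure that the kernel acts nontrivially on the chosen $s$-section, so that the hypothesis $C_V(F_0)\ne V$ of the fixed-point theorem holds, and that the generator of $C$ genuinely lifts to an element of order $|C|$ normalising $P$. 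Controlling the nonabelian structure of $N$ when descending from $P$ to $V=P/\Phi(P)$, and guaranteeing that the fixed vector survives the lift back to $P$, is where the delicate work lies.
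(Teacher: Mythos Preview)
The paper gives no proof of this lemma; it is quoted from \cite[Lemma~1]{Mazurov} without argument, so there is nothing in the text to compare against. Judged on its own, your outline follows the standard route---localise to an $F_0$-invariant Sylow $s$-subgroup $P$, pass to $V=P/\Phi(P)$, and invoke the fixed-point theorem for the cyclic complement of a coprime Frobenius action---and this is exactly how the case of an $s$-group $N$ is handled. The gap is the one you yourself flag at the end and do not close: the hypothesis $F\not\le NC_G(N)/N$ says only that $F_0$ acts nontrivially on $N$, not on the particular Sylow $s$-subgroup you have chosen, so the prerequisite $C_V(F_0)\ne V$ of the fixed-point step can fail.

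This gap is not merely technical. Let $N=\mathbb{Z}_5\times\mathbb{Z}_7^2$ and $G=N\rtimes S_3$, where the $3$-element $f$ centralises $\mathbb{Z}_5$ and acts on $\mathbb{Z}_7^2$ as $\operatorname{diag}(2,4)$, while the involution $c$ inverts $\mathbb{Z}_5$ and swaps the two $\mathbb{Z}_7$-factors (the relation $cfc^{-1}=f^{-1}$ is respected since the swap conjugates $\operatorname{diag}(2,4)$ to $\operatorname{diag}(4,2)=\operatorname{diag}(2,4)^{-1}$). Then $G/N\cong S_3$ is Frobenius with $|F|=3$ and $|C|=2$; $(|F|,|N|)=1$; and $C_G(N)=N$, so $F\not\le NC_G(N)/N$. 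Yet every involution of $G$ inverts the normal Sylow $5$-subgroup, so $10=5\cdot|C|\notin\omega(G)$. Hence your method cannot deliver the clause ``for each $s\in\pi(N)$'' in general. What your argument legitimately establishes is the conclusion for every prime $s$ with $[P,F_0]\ne1$ (equivalently, Mazurov's lemma with $N$ an $s$-group), and this refined version is all the paper's single application to $S\cong J_1$ actually needs: there $N$ is nilpotent with normal Sylow subgroups, and if the order-$11$ kernel centralised some $N_s$ one would obtain $11s\in\omega(G)$ and hence $11\in\pi_1(G)$, which is precisely the other alternative in the stated dichotomy.
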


\begin{lemm}[{\rm \cite[Table~5.1.B]{KL}, \cite[Proposition~2.7]{VaVd11} and \cite[Propositions~3.2,~4.5]{VaVd05}}]\label{graphE8}
Let $G\cong  E_8(q)$, where $q$ is a power of a prime $p$. Then the following statements hold{\rm:}

$(1)$ $|G|= q^{120} \cdot \prod_{i \in \{2, 8, 12, 14, 18, 20, 24, 30\}} (q^i-1)$.

$(2)$ Suppose that $r,s\in \pi(G)$ with $r\ne s$. Then $r$ and $s$ are nonadjacent in $\Gamma(G)$ if and only if one of the following conditions holds:
	\begin{enumerate}
  \item [\text{\normalfont(1)}] $r\in\{2,p\}$, $s\ne p$ and $e(s,q)\in\{15,20,24,30\}$.
  \item [\text{\normalfont(2)}] $s,r\not\in\{2,p\}$, $k=e(r,q),~l=e(s,q)$, $1\le k < l$, and either $l = 6$ and $k = 5$, or $l\in \{7, 14\}$ and $k\ge 3$, or $l = 9$ and
$k\ge 4$, or $l\in \{8, 12\}$ and $k\ge 5$, $k\ne 6$, or $l = 10$ and $k\ge 3$, $k\not\in\{4,6\}$, or $l = 18$ and
$k\not\in\{1, 2, 6\}$, or $l = 20$ and $r\cdot k\ne20$, or $l\in \{15, 24, 30\}$.
    \end{enumerate}

In particular, the compact form for $\Gamma(E_8(q))$ is the following. Here, $$R (q)= R_1(q) \cup R_2(q) \cup \{p\}$$ and the vector from $5$ to $R_4(q)$ and the dotted edge $\{5, R_{20}(q)\}$ indicate that $R_4(q)$ and $R_{20}(q)$ are not adjacent, but if $5\in R_4(q)$ {\rm(}i.e., $q^2\equiv -1 \pmod {5}${\rm)}, then there exist edges between $5$ and the primes from $R_{20}(q)$.
\usetikzlibrary {arrows.meta}
 \centering{
	\begin{tikzpicture}
		\tikzstyle{every node}=[draw,circle,fill=black,minimum size=4pt,
		inner sep=0pt]
		
		\draw (0,0) node (1) [label=below:$\mbox{ }\mbox{ }R(q)$]{}
		++ (-15:3.0cm) node (18) [label=below:$R_{18}(q)$]{}
        ++ (80:1.4cm) node (5) [label=right:$\mbox{ } \mbox{ } R_{5}(q)$]{}
        ++ (120:1.4cm) node (3) [label=right:$R_{3}(q)$]{}
        ++ (160:1.4cm) node (8) [label=above:$R_{8}(q)$]{}
        ++ (180:1.9cm) node (12) [label=above:$R_{12}(q)$]{}
        ++ (200:1.4cm) node (6) [label=left:$R_{6}(q)$]{}
        ++ (-125:1.4cm) node (10) [label=left:$R_{10}(q)$]{}
        ++ (-90:1.9cm) node (9) [label=left:$R_{9}(q)$]{}
        ++ (-35:2.2cm) node (14) [label=left:$R_{14}(q)$]{}
        ++ (0:2.9cm) node (7) [label=right:$R_{7}(q)$]{}
        ++ (40:3.0cm) node (4) [label=below:$R_{4}(q)$]{}
        ++ (10:3.0cm) node (55) [label=above:$5$]{}
        ++ (0:1.0cm) node (20) [label=right:$R_{20}(q)$]{}
		++ (-100:1.5cm) node (15) [label=right:$R_{15}(q)$]{}
		++ (100:3.0cm) node (24) [label=right:$R_{24}(q)$]{}
		++ (100:1.5cm) node (30) [label=right:$R_{30}(q)$]{}
        (1)--(7)
		(1)--(14)
		(4)--(8)
		(10)--(6)
		(10)--(4)
		(10)--(1)
		(1)--(6)
		(6)--(4)
		(6)--(18)
		(6)--(8)
		(6)--(12)
		(6)--(3)
		(1)--(18)
		(1)--(8)
		(3)--(8)
		(1)--(12)
		(4)--(12)
		(3)--(12)
		(1)--(9)
		(3)--(9)
		(1)--(3)
		(4)--(3)
		(5)--(3)
		(5)--(1)
            (1)--(4)
		(5)--(4);
        \draw[arrows = {-Latex[width=6pt, length=6pt]}]
		(55)--(4);
		\draw[dotted]
		(55)--(20)
		;
	\end{tikzpicture}}
\end{lemm}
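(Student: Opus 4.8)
The plan is to assemble the two assertions from the structural theory of finite groups of Lie type, treating the order formula and the adjacency criterion separately, since both are already available in the cited sources and the task is to record them in the form needed later.

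Part $(1)$ is the standard order formula. The simple algebraic group of type $E_8$ has rank $8$, its root system has $N=120$ positive roots, and the degrees of the basic invariants of the Weyl group $W(E_8)$ are $2, 8, 12, 14, 18, 20, 24, 30$ (note these sum to $128=N+8$, consistently with $\sum(d_i-1)=N$, and $\prod d_i=|W(E_8)|$). Hence $|E_8(q)|=q^{120}\prod_i(q^{d_i}-1)$ taken over these degrees, which is exactly the entry of \cite[Table~5.1.B]{KL}; I would simply cite that table.

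For the adjacency criterion $(2)$ I would use the maximal-torus method of Vasil'ev and Vdovin. Every element of $G$ of order coprime to $p$ lies in a maximal torus, and the maximal tori of (split) $E_8(q)$ are parametrised by the $112$ conjugacy classes of $W(E_8)$: to $w$ corresponds a torus $T_w$ whose order is the value at $q$ of the characteristic polynomial of $w$ on the $8$-dimensional reflection representation, hence a product $\prod_d \Phi_d(q)^{a_d(w)}$ of cyclotomic factors. For a prime $r\ne p$ with $e(r,q)=k$ one has $r\mid\Phi_k(q)$ (existence of such primitive divisors being guaranteed by Lemma~\ref{zsigm}), and $r\mid|T_w|$ precisely when $\Phi_k\mid\chi_w$, i.e. $w$ has a primitive $k$-th root of unity as eigenvalue. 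Only the indices $k$ with $\varphi(k)\le 8$ occur, namely $k\in\{1,2,3,4,5,6,7,8,9,10,12,14,15,18,20,24,30\}$, the divisors of the $d_i$; these label the vertices $R_k(q)$ of the compact graph. Two primes $r,s\notin\{2,p\}$ with $e(r,q)=k\le l=e(s,q)$ are then adjacent if and only if $G$ has an element of order $rs$, which by the torus reduction amounts (away from the degenerate prime below) to the existence of a single $w\in W(E_8)$ with $\Phi_k\mid\chi_w$ and $\Phi_l\mid\chi_w$. Running through the conjugacy classes and recording which pairs $(k,l)$ are simultaneously realised yields exactly list $(2)$; this is the computation of \cite[Propositions~3.2,~4.5]{VaVd05} refined in \cite[Proposition~2.7]{VaVd11}. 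The primes $2$ and $p$ I would handle by centralisers: when $\varphi(k)=8$, i.e. $k\in\{15,20,24,30\}$, an element $r_s$ with $e(s,q)=k$ is regular and self-centralising in the cyclic torus $T$ of order $\Phi_k(q)$, which has order coprime to $p$ and odd for $q$ odd; hence no element of order $p\cdot r_s$ or $2\cdot r_s$ exists, giving the nonadjacencies of $(1)$, whereas for every other index the centraliser is strictly larger and supplies both a commuting involution and a commuting unipotent element.

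The main obstacle is twofold. First, the condition ``$\Phi_k$ and $\Phi_l$ both divide $\chi_w$ for a common $w$'' must be verified over all $112$ conjugacy classes of $W(E_8)$, a finite but substantial bookkeeping that produces the long disjunction in $(2)$; this is where I would rely on the explicit tables of \cite{VaVd05}. Second, the clean equivalence ``$r\mid\Phi_k(q)\Leftrightarrow e(r,q)=k$'' fails for the single prime $r=5$: when $e(5,q)=4$ (that is, $q^2\equiv-1\pmod 5$) the prime $5$ divides not only $\Phi_4(q)=q^2+1$ but also $\Phi_{20}(q)$, since $20=4\cdot 5$, so a prime in $R_4(q)$ can simultaneously lie in the $\Phi_{20}$-torus and thereby become adjacent to $R_{20}(q)$. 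This non-primitive divisibility is precisely what the conditional clause ``$r\cdot k\ne 20$'' in $(2)$ encodes, as it fails exactly when $r=5$ and $k=4$, and it is what the dotted edge $\{5,R_{20}(q)\}$ in the compact form expresses. Handling this one case correctly is the only point requiring care beyond the Weyl-group combinatorics; once all adjacencies and nonadjacencies are collected and read off against the vertex set $\{R_k(q)\}\cup\{R(q),5\}$, one obtains exactly the displayed compact form of $\Gamma(E_8(q))$.
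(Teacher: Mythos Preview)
The paper does not prove this lemma at all: it is stated with attribution to \cite[Table~5.1.B]{KL}, \cite[Proposition~2.7]{VaVd11} and \cite[Propositions~3.2,~4.5]{VaVd05} and then used as a black box, so there is no ``paper's own proof'' to compare against. Your sketch is a faithful summary of exactly the argument carried out in those cited references: the order formula from the degrees $2,8,12,14,18,20,24,30$ of $W(E_8)$, the parametrisation of maximal tori by the $112$ conjugacy classes of $W(E_8)$ with $|T_w|$ read off from the cyclotomic factorisation of $\chi_w$, the reduction of semisimple adjacency to the existence of a common $w$ with $\Phi_k\Phi_l\mid\chi_w$, and the separate treatment of $2$ and $p$ via centralisers of regular elements in the $\Phi_k$-tori for $\varphi(k)=8$. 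Your identification of the exceptional clause ``$r\cdot k\ne 20$'' with the phenomenon $5\mid\Phi_{20}(q)$ when $e(5,q)=4$ is also exactly the point isolated in \cite[Proposition~2.7]{VaVd11}.

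One small caution: the passage ``regular and self-centralising in the cyclic torus $T$ of order $\Phi_k(q)$'' is slightly loose. An arbitrary element of prime order $r$ with $e(r,q)=k\in\{15,20,24,30\}$ need not have centraliser equal to $T$ on the nose; what one actually uses is that every maximal torus containing such an element has characteristic polynomial divisible by $\Phi_k$, and since $\varphi(k)=8$ equals the rank this forces $\chi_w=\Phi_k$, so the connected centraliser $C_G(x)^\circ$ is this torus and hence has order coprime to $p$ and (for odd $q$) to $2$. This is the argument in \cite{VaVd05}; your version would benefit from saying ``the connected centraliser is $T$'' rather than ``self-centralising''. With that adjustment your outline is correct and matches the cited sources.
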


\begin{lemm}[{\rm \cite[Lemma~2.16]{MasPansStar}}]\label{E8nontrivK_R24}
Let $G$ be a group with a non-trivial nilpotent normal subgroup $K$ such that $G/K$ has a subgroup $H$ isomorphic to $E_8(q)$, where $q$ is a prime power. Then $R_{24}(q) \subset \pi_1(G)$.
\end{lemm}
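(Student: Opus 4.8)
The plan is to reduce the problem to a fixed-point statement for elements of order $r\in R_{24}(q)$ acting on modules for $E_8(q)$, and then to treat the defining and the cross characteristics separately, the latter being the hard part.

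First I would reduce to the case where the quotient is exactly $E_8(q)$. Let $\widetilde H$ be the full preimage in $G$ of $H$ under $G\to G/K$; then $K\unlhd\widetilde H$ and $\widetilde H/K\cong E_8(q)$. Since $|gK|$ divides $|g|$ for every $g$, we have $\omega(G/K)\subseteq\omega(G)$, so $\Gamma(\widetilde H)$ sits inside $\Gamma(G)$ with every edge preserved; hence it suffices to prove $R_{24}(q)\subseteq\pi_1(\widetilde H)$, and we may assume $G=\widetilde H$. For the same reason $\Gamma(E_8(q))=\Gamma(G/K)$ embeds in $\Gamma(G)$ edge-preservingly, so the connected vertex set $\pi_1(E_8(q))$, which contains $2$ and the defining prime $p$, lies in $\pi_1(G)$; in particular $2,p\in\pi_1(G)$. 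Note that $r$ exists by Lemma~\ref{zsigm}, and that in $\Gamma(E_8(q))$ the prime $r$ is non-adjacent to $p$ by Lemma~\ref{graphE8}(1), which is exactly why the component $R_{24}(q)$ is separated there and must be reattached using $K$.

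Next I would pass to a module. Fix $r\in R_{24}(q)$ and a prime $\ell\in\pi(K)$, let $M=O_\ell(K)\ne 1$ (characteristic in $K$, hence normal in $G$), and choose a chief factor $V=A/B$ of $G$ with $B<A\le M$, so $V$ is a nontrivial elementary abelian $\ell$-group. Since $K$ is nilpotent it lies in $F(G)$ and therefore centralizes $V$, so $V$ is an $\mathbb F_\ell[E_8(q)]$-module. If $G$ acts trivially on $V$ (a central chief factor), then $G$ has a central element of order $\ell$, which is adjacent in $\Gamma(G)$ to every other prime; in particular $r$ and $2$ are joined through $\ell$ and we are done, so I may assume $V$ is a nontrivial module, necessarily faithful as $E_8(q)$ is simple. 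The argument is then driven by the following coprime-action adjacency principle: if $x\in G$ is an $s$-element with $s\ne\ell$ whose image $\bar x\in E_8(q)$ satisfies $C_V(\bar x)\ne 0$, then by the standard lift of fixed points under coprime action $C_A(x)\not\le B$, so one finds $a\in C_A(x)$ of order $\ell$ commuting with $x$ and produces an element of order $s\ell$; thus $s$ and $\ell$ are adjacent in $\Gamma(G)$. Taking $\bar x=t$ an involution, which cannot act as $-1$ on $V$ because $Z(E_8(q))=1$, gives $C_V(t)\ne 0$ and shows $\ell$ is adjacent to $2$; hence $\ell\in\pi_1(G)$ for every $\ell\in\pi(K)$ (the case $\ell=2$ being trivial). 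It remains to connect $r$ itself to some such $\ell$, i.e.\ to show that an element $g$ of order $r$ fixes a nonzero vector of $V$.

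When $\ell=p$ this is immediate: by unisingularity of $E_8(q)$ (Lemma~\ref{Unisingular}(iv)) the element $g$ fixes a nonzero vector of $V$, the adjacency principle yields $pr\in\omega(G)$, and since $p\in\pi_1(G)$ we conclude $r\in\pi_1(G)$. The main obstacle is the cross-characteristic case $\ell\ne p$, which is genuinely unavoidable since $p\nmid|K|$ cannot be routed through $p$ (there is no element of order $pr$). Here I would argue by contradiction via Lemma~\ref{BrChar}: if $g$ acted fixed-point-freely on $K$, then for the prime $\ell\mid|K|$ there would be $\chi\in IBr_\ell(E_8(q))$ with $[\chi_T,1_T]=0$, where $T=\langle gK\rangle$ has order $r$. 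The task is to exclude this for every prime $\ell\ne p$, that is, to prove a characteristic-free ``unisingularity at $r$'': every nontrivial irreducible cross-characteristic representation of $E_8(q)$ has the order-$r$ element acting with eigenvalue $1$. I would establish this by combining the eigenvalue criterion of Lemma~\ref{Eigenvector1} with a size comparison: $g$ is regular semisimple with centralizer the cyclic torus of order $\Phi_{24}(q)=q^8-q^4+1$ (all of whose prime divisors lie in $R_{24}(q)$), so its Brauer character values $\chi(g^i)$ and the order $r<q^8$ are only polynomially large in $q$, whereas by the Landazuri--Seitz--Zalesskii lower bound any nontrivial cross-characteristic module of $E_8(q)$ has far larger dimension; this forces $[\chi_T,1_T]=\tfrac1r\sum_i\chi(g^i)>0$, contradicting fixed-point-freeness. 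Once $C_V(g)\ne 0$ is secured, the adjacency principle gives $r\ell\in\omega(G)$, and with $\ell\in\pi_1(G)$ we obtain $r\in\pi_1(G)$, completing the proof.
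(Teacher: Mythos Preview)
The paper does not prove this lemma; it is quoted from \cite[Lemma~2.16]{MasPansStar} without argument, so there is no in-paper proof to compare against and your proposal must be assessed on its own merits.

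Your reduction to $G/K\cong E_8(q)$, the passage to an irreducible $\mathbb{F}_\ell[E_8(q)]$-chief factor $V$, the coprime fixed-point ``adjacency principle'', the treatment of the case $\ell=p$ via Lemma~\ref{Unisingular}, and the argument that every $\ell\in\pi(K)$ is adjacent to $2$ (using that an involution cannot act as the scalar $-1$ on a faithful module for a centreless group) are all correct.

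The genuine gap is the cross-characteristic step. You assert that the Brauer character values $\chi(g^{i})$ are ``only polynomially large in $q$'' because $g$ lies in the cyclic torus of order $\Phi_{24}(q)$, and then invoke the Landazuri--Seitz--Zalesski bound on $\chi(1)$ to force $[\chi_{T},1_{T}]>0$. But the inequality $|\psi(g)|\le |C_G(g)|^{1/2}$ is a consequence of the second orthogonality relation for \emph{ordinary} characters and has no analogue for $\ell$-Brauer characters. A priori $|\chi(g^{i})|$ can be as large as $\chi(1)$: writing $m_{0},\dots,m_{r-1}$ for the eigenvalue multiplicities of $g$ on $V$, the only constraints are $m_{j}\ge 0$ and $\sum m_{j}=\chi(1)$, and these do not bound $|\sum_{j} m_{j}\zeta^{ij}|$ away from $\chi(1)$. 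Nor are the nontrivial powers $g^{i}$ all $E_8(q)$-conjugate in general (the normaliser of the $\Phi_{24}$-torus has index $24$ over the torus, while $r-1$ can be much larger), so one cannot shortcut via $\chi(g^{i})=\chi(g)$. Thus the inequality $\chi(1)>(r-1)\max_{i\ne 0}|\chi(g^{i})|$ your positivity argument needs is unproved.

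What is actually required is an ``eigenvalue $1$'' (Hall--Higman type) statement for elements of order $r\in R_{24}(q)$ in every nontrivial cross-characteristic irreducible of $E_8(q)$; Lemma~\ref{TiepZalThm} does not cover $E_8$. One standard way to obtain this is a subgroup reduction: find $H_{0}\le E_8(q)$ containing the $\Phi_{24}(q)$-torus and belonging to the list in Lemma~\ref{TiepZalThm}, restrict $V$ to $H_{0}$, and apply Lemma~\ref{Eigenvector1} on each nontrivial constituent (trivial constituents being fixed pointwise). This, or an equivalent representation-theoretic argument, is what \cite{MasPansStar} supplies; your size-comparison sketch as written does not close the gap.
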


\begin{lemm}\label{OrdersAndGraphs}
$(1)$ If $S\cong E_8(2)$, then $|\pi(S)|=16$.

$(2)$ If $S \cong E_8(3)$, then $|\pi(S)|=19$,
$$\pi_1(S)=\{2, 3, 5, 7, 11, 13, 19, 37, 41, 61, 73, 547, 757, 1093, 1181\},$$ $$\mbox{ } \pi_2(S)=\{4561\}, \mbox{ } \pi_3(S)=\{6481\}, \mbox{ and } \pi_4(S)=\{31, 271\};$$
\begin{itemize}
\item[] ${\rm deg}(2)={\rm deg}(3)=13$,
\item[] ${\rm deg}(5)={\rm deg}(7)=9$,
\item[] ${\rm deg}(13)=8$,
\item[] ${\rm deg}(41)={\rm deg}(73)=5$,
\item[] ${\rm deg}(11)={\rm deg}(19)={\rm deg}(37)={\rm deg}(61)=4$,
\item[] ${\rm deg}(757)=3$,
\item[] ${\rm deg}(547)={\rm deg}(1093)=2$,
\item[] ${\rm deg}(31)={\rm deg}(271)={\rm deg}(1181)=1$,
\item[] ${\rm deg}(4561)={\rm deg}(6481)=0$.
\end{itemize}

\smallskip
$(3)$ If $S \cong E_8(4)$, then $|\pi(S)|=26$,
$$\pi_1(S)=\{2, 3, 5, 7, 11, 13, 17, 19, 29, 31, 37, 41, 43,  73, 109, 113, 127,  241, 257\},$$
$$\mbox{ } \pi_2(S)=\{151, 331\}, \mbox{ } \pi_3(S)=\{61,1321\}, \mbox{ } \pi_4(S)=\{97,673\}, \mbox{ and }\pi_5(S)=\{61681\}.$$

\smallskip

$(4)$  If $S \cong E_8(5)$, then $|\pi(S)|=24$,
$$\pi_1(S)=\{2, 3, 5, 7, 11, 13, 19,  29, 31, 71, 313, 449, 521, 601, 829, 5167, 19531\},$$
$$\mbox{ } \pi_2(S)=\{181, 1741\}, \mbox{ } \pi_3(S)=\{61,7621\}, \mbox{ } \pi_4(S)=\{390001\}, \mbox{ and }\pi_5(S)=\{41,9161\}.$$

\smallskip

$(5)$ If $S \cong E_8(7)$, then $|\pi(S)|=27$,
$$\pi_1(S)=\{2, 3, 5, 7, 11, 13, 19, 29,  37, 43,  113, 181, 191,$$  $$281, 911, 1063, 1201, 2801, 4021, 4733, 117307\},$$
$$\mbox{ } \pi_2(S)=\{31,159871\}, \mbox{ } \pi_3(S)=\{73,193,409\}, \mbox{ and } \pi_4(S)=\{6568801\};$$
\begin{itemize}
\item[] ${\rm deg}(2)={\rm deg}(3)={\rm deg}(7)=18$,
\item[] ${\rm deg}(5)=13$,
\item[] ${\rm deg}(19)={\rm deg}(43)=11$,
\item[] ${\rm deg}(13)={\rm deg}(181)=7$,
\item[] ${\rm deg}(11)={\rm deg}(191)={\rm deg}(1201)=6$,
\item[] ${\rm deg}(37)={\rm deg}(1063)={\rm deg}(2801)=5$,
\item[] ${\rm deg}(29)={\rm deg}(113)={\rm deg}(911)={\rm deg}(4733)={\rm deg}(117307)=4$,
\item[] ${\rm deg}(73)={\rm deg}(193)={\rm deg}(281)={\rm deg}(409)={\rm deg}(4021)=2$,
\item[] ${\rm deg}(31)={\rm deg}(159871)=1$,
\item[] ${\rm deg}(6568801)=0$.
\end{itemize}

\smallskip

$(6)$ If $S \cong E_8(8)$, then $|\pi(S)|=29$,
$$\pi_1(S)=\{2, 3, 5, 7, 11, 13, 17, 19, 31, 37, 41, 43, 61, 73, 109, 127,$$ $$ 151, 241, 331, 337, 1321, 5419, 87211, 262657 \},$$
$$\mbox{ } \pi_2(S)=\{631,23311\}, \mbox{ } \pi_3(S)=\{433, 38737\}, \mbox{ and } \pi_4(S)=\{18837001\};$$
\begin{itemize}
\item[] ${\rm deg}(2)={\rm deg}(3)={\rm deg}(7)=20$,
\item[] ${\rm deg}(5)=17$,
\item[] ${\rm deg}(13)=14$,
\item[] ${\rm deg}(19)={\rm deg}(73)=13$,
\item[] ${\rm deg}(17)={\rm deg}(37)={\rm deg}(109)={\rm deg}(241)=8$,
\item[] ${\rm deg}(11)={\rm deg}(31)={\rm deg}(151)={\rm deg}(331)=7$,
\item[] ${\rm deg}(43)={\rm deg}(127)={\rm deg}(337)={\rm deg}(5419)={\rm deg}(87211)={\rm deg}(262657)=4$,
\item[] ${\rm deg}(41)={\rm deg}(61)={\rm deg}(1321)=3$,
\item[] ${\rm deg}(433)={\rm deg}(631)={\rm deg}(23311)={\rm deg}(38737)=1$,
\item[] ${\rm deg}(18837001)=0$.
\end{itemize}

\smallskip

$(7)$ If $S \cong E_8(9)$, then $|\pi(S)|=29$,
$$\pi_1(S)=\{2, 3, 5, 7, 11, 13, 17, 19, 29, 37, 41, 61, 73, 193, 547, 757, 1093, 1181, 6481, 16493, 530713\},$$
$$\mbox{ } \pi_2(S)=\{31, 271, 4561\}, \mbox{ } \pi_3(S)=\{47763361\}, \mbox{ }$$ $$ \pi_4(S)=\{97, 577, 769\}, \mbox{ and }\pi_5(S)=\{42521761\}.$$

\smallskip

$(8)$ If $S \cong E_8(17)$, then $|\pi(S)|=28$,
$$\pi_1(S)=\{2, 3, 5, 7, 11, 13, 17, 19, 29, 71, 101, 307, 1423, 5653, $$ $$21881, 41761, 63541, 83233, 88741, 1270657, 22796593, 25646167\},$$
$$\mbox{ } \pi_2(S)=\{6566760001\}, \mbox{ } \pi_3(S)=\{31, 238212511\}, \mbox{ and } \pi_4(S)=\{73, 1321, 72337\};$$
\begin{itemize}
\item[] ${\rm deg}(2)={\rm deg}(3)={\rm deg}(17)=19$,
\item[] ${\rm deg}(5)=15$,
\item[] ${\rm deg}(7)={\rm deg}(13)=14$,
\item[] ${\rm deg}(29)=13$,
\item[] ${\rm deg}(307)=12$,
\item[] ${\rm deg}(11)={\rm deg}(71)={\rm deg}(101)=9$,
\item[] ${\rm deg}(41761)={\rm deg}(83233)=8$,
\item[] ${\rm deg}(1423)={\rm deg}(5653)={\rm deg}(88741)=6$,
\item[] ${\rm deg}(19)={\rm deg}(1270657)=5$,
\item[] ${\rm deg}(22796593)={\rm deg}(25646167)=3$,
\item[] ${\rm deg}(73)={\rm deg}(1321)={\rm deg}(72337)={\rm deg}(21881)={\rm deg}(63541)=2$,
\item[] ${\rm deg}(31)={\rm deg}(238212511)=1$
\item[] ${\rm deg}(6566760001)=0$.
\end{itemize}

\end{lemm}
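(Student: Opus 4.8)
The plan is to treat Lemma~\ref{OrdersAndGraphs} as a finite, entirely computational verification, one value of $q$ at a time, resting on the order formula and the adjacency criterion of Lemma~\ref{graphE8} together with the Bang--Zsigmondy Lemma~\ref{zsigm}. First I would record that by Lemma~\ref{graphE8}$(1)$ every prime divisor of $|E_8(q)|$ other than the characteristic $p$ divides $q^i-1$ for some $i\in\{2,8,12,14,18,20,24,30\}$. Writing $q^i-1=\prod_{d\mid i}\Phi_d(q)$, the divisor closure of this index set is $D=\{1,2,3,4,5,6,7,8,9,10,12,14,15,18,20,24,30\}$; since the multiplicative order $e(r,q)$ of each odd prime $r\ne p$ is uniquely determined, the primes of $\pi(S)$ are partitioned as $\pi(S)=\{p\}\cup\bigcup_{d\in D}R_d(q)$, with the prime $2$ absorbed into the central set $R(q)=R_1(q)\cup R_2(q)\cup\{p\}$ via the convention on $e(2,q)$. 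Here $R_d(q)$ is the set of primitive prime divisors of $q^d-1$, nonempty for all $d$ in range by Lemma~\ref{zsigm}, the only relevant exceptions being $R_6(2)=\emptyset$ and the empty odd parts of $R_1(2)$ and $R_1(3)$.

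The computational core is then, for each fixed $q$, to factor each integer $\Phi_d(q)$ for $d\in D$ into primes. This simultaneously yields the explicit membership of each prime in a unique $R_d(q)$ and the value $|\pi(S)|$ as the number of distinct primes so obtained together with $p$. For example, for $q=3$ one computes $\Phi_{15}(3)=4561$, $\Phi_{24}(3)=6481$ and $\Phi_{30}(3)=8401=31\cdot271$, recovering $\pi_2,\pi_3,\pi_4$, while the remaining thirteen primes fall into $R(3)=\{2,3\}$ and the singletons $R_3(3),\dots,R_{20}(3)$; I would carry out the analogous factorizations for $q\in\{2,4,5,7,8,9,17\}$.

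Once the sets $R_d(q)$ are known, the connected components are read off from Lemma~\ref{graphE8}$(2)$, equivalently from its compact graph. The decisive dichotomy is whether $5\in R_4(q)$: for $q\equiv\pm2\pmod5$ (the cases $q\in\{2,3,7,8,17\}$) one has $e(5,q)=4$, so the dotted edge activates and $R_{20}(q)$ joins the main component, giving $s(S)=4$ with $\pi_2=R_{15}(q)$, $\pi_3=R_{24}(q)$, $\pi_4=R_{30}(q)$; for $q\equiv0,\pm1\pmod5$ (the cases $q\in\{4,5,9\}$) the prime $5$ lies in $R(q)$ or equals $p$, so $R_{20}(q)$ stays isolated and $s(S)=5$ with the extra component $\pi_5=R_{20}(q)$. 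Since each $\pi_i(S)$ with $i\ge2$ is a clique by Lemma~\ref{Table}$(1)$, its vertices have degree $|\pi_i(S)|-1$, which settles the stated degrees $0$ and $1$ on the isolated components. For the vertices inside $\pi_1(S)$ (needed in the degree lists for $q\in\{3,7,8,17\}$) I would use that adjacency in Lemma~\ref{graphE8}$(2)$ depends only on the pair $(e(r,q),e(s,q))$; hence whenever the nodes $R_d(q)$ and $R_{d'}(q)$ are joined in the compact graph, every prime of $R_d(q)$ is adjacent to every prime of $R_{d'}(q)$, so the degree of $r\in R_d(q)$ equals $(|R_d(q)|-1)+\sum_{d'\sim d}|R_{d'}(q)|$. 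The single exception is the prime $5$: when $5\in R_4(q)$ the special edge adds $|R_{20}(q)|$ to $\deg(5)$ alone and not to the other primes of $R_4(q)$, as in $q=8$ where $R_4(8)=\{5,13\}$ but only $5$ sees $R_{20}(8)$. Summing the relevant clique sizes then reproduces each listed degree.

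The main obstacle is not conceptual but computational. For large $q$ and $d$ the values $\Phi_d(q)$ are large integers --- for $q=17$ the numbers $\Phi_{24}(17)$ and $\Phi_{30}(17)$ already have nine and ten digits --- so the real work lies in factoring them completely and certifying the primality of the large prime factors such as $6566760001$ and $238212511$. These arithmetic verifications I would carry out in a computer algebra system; the theoretical input is confined to Lemmas~\ref{zsigm}, \ref{Table} and~\ref{graphE8}, which reduce the determination of $|\pi(S)|$, the components, and the degrees to precisely these finitely many factorizations.
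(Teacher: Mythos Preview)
Your proposal is correct and matches the paper's (implicit) approach: the paper states Lemma~\ref{OrdersAndGraphs} without proof, treating it as a direct computational consequence of the order formula and adjacency criterion in Lemma~\ref{graphE8}, which is exactly the verification you outline. Your explicit reduction via the cyclotomic factorizations $\Phi_d(q)$ over the divisor set $D$, the case split on $e(5,q)$ governing $s(S)\in\{4,5\}$, and the degree formula $\deg(r)=(|R_d(q)|-1)+\sum_{d'\sim d}|R_{d'}(q)|$ with the special handling of the $5$--$R_{20}(q)$ edge are all correct and constitute a complete plan.
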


\medskip

\begin{lemm}\label{Primes20} Let $L=E_8(q)$. The following statements hold{\rm:}

$(1)$ if $q >3$ is a prime power, then $$|\pi_1(L)|\ge \begin{cases} |\pi(q\cdot (q^2-1))|+11 \mbox{ if } q\equiv 0, \pm 1 \pmod{5};\\
|\pi(q\cdot (q^2-1))|+12 \mbox{ if } q\equiv \pm 2 \pmod{5};
 \end{cases}$$
and in $\Gamma(L)$ each vertex from $\pi(q\cdot (q^2-1))$ has degree at least $|\pi(q\cdot (q^2-1))|+10$.

$(2)$ if $q=q_0^r$, where $r \ge 11$ is a prime, then $$|\pi_1(L)|\ge |\pi(q\cdot (q^2-1))| +17$$ and in $\Gamma(L)$ each vertex from $\pi(q\cdot (q^2-1))$ has degree at least $|\pi(q\cdot (q^2-1))|+16$.

\end{lemm}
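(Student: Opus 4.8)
The plan is to read $\pi_1(L)$ off from Table~1 and Lemma~\ref{graphE8}, and then to lower‑bound both $|\pi_1(L)|$ and the degree in $\Gamma(L)$ of a vertex $t\in\pi(q(q^2-1))$ by producing explicit primitive prime divisors contained in $\pi_1(L)$. First I would record, by Table~1, that $\pi_1(L)=\pi\big(q(q^8-1)(q^{12}-1)(q^{14}-1)(q^{18}-1)(q^{20}-1)\big)$ if $q\equiv\pm2\pmod{5}$ and $\pi_1(L)=\pi\big(q(q^8-1)(q^{10}-1)(q^{12}-1)(q^{14}-1)(q^{18}-1)\big)$ if $q\equiv0,\pm1\pmod{5}$. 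Factoring each $q^i-1$ into cyclotomic values $\Phi_d(q)$, in both cases one reads off that $\pi(q(q^2-1))\subseteq\pi_1(L)$ (since $q^2-1\mid q^8-1$) and that $R_d(q)\subseteq\pi_1(L)$ for every $d$ in $D_0:=\{3,4,5,6,7,8,9,10,12,14,18\}$, and moreover $R_{20}(q)\subseteq\pi_1(L)$ when $q\equiv\pm2\pmod{5}$.

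Next I would establish, from the adjacency criterion of Lemma~\ref{graphE8}, two facts. (a) $\pi(q(q^2-1))$ is a clique of $\Gamma(L)$: for distinct vertices $t,t'$ in this set every occurring $e$-value (for primes other than $p$) lies in $\{1,2\}$, so neither nonadjacency clause can apply, because no listed value of $l$ equals $1$ or $2$ and the clause involving $\{2,p\}$ requires the partner to have $e$-value in $\{15,20,24,30\}$. (b) Every $t\in\pi(q(q^2-1))$ is adjacent to every prime $s$ with $e(s,q)\in D_0$: indeed $e(t,q)\in\{1,2\}$ or $t\in\{2,p\}$, while $D_0\cap\{15,20,24,30\}=\emptyset$, and the only borderline clauses, $l=18$ (which needs $k\notin\{1,2,6\}$) and $l=20$ (absent from $D_0$), do not fire. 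I would also note that $t$ need not be adjacent to $R_{20}(q)$, which is precisely why the degree bound in part (1) is only $+10$, not $+11$, when $q\equiv\pm2\pmod{5}$.

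Given this, part (1) follows quickly. Since $q>3$, none of the pairs $(q,d)$ with $d\in D_0\cup\{20\}$ is an exceptional case of the Bang--Zsigmondy Lemma~\ref{zsigm}, so a primitive prime divisor $r_d(q)$ exists for each such $d$; the primes $r_d(q)$ with $d\in D_0$ are $11$ pairwise distinct primes lying in $\pi_1(L)\setminus\pi(q(q^2-1))$ (as $e(r_d(q),q)=d\ge3$ and $r_d(q)\ne p$), whence $|\pi_1(L)|\ge|\pi(q(q^2-1))|+11$, with $r_{20}(q)$ contributing one more when $q\equiv\pm2\pmod{5}$. For the degree, by (a) a vertex $t\in\pi(q(q^2-1))$ is joined to the other $|\pi(q(q^2-1))|-1$ vertices of the clique and by (b) to the $11$ primes $r_d(q)$, $d\in D_0$, so $\deg(t)\ge|\pi(q(q^2-1))|+10$. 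For part (2), with $q=q_0^r$ and $r\ge 11$ prime, I would use that $\gcd(d,r)=1$ for every $d\in D_0$ (no element of $D_0$ is divisible by a prime $\ge 11$); together with the identity $e(t,q_0^r)=e(t,q_0)/\gcd(e(t,q_0),r)$ this shows that $e(t,q_0)=d$ and $e(t,q_0)=dr$ both force $e(t,q)=d$, so $R_d(q)$ contains the two disjoint sets $R_d(q_0)$ and $R_{dr}(q_0)$; here $R_{dr}(q_0)\ne\emptyset$ because $dr\ge 33$ is never an exceptional case of Lemma~\ref{zsigm}, and $R_d(q_0)\ne\emptyset$ as well unless $q_0=2$ (the only offending pair being $(2,6)$). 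Hence $\sum_{d\in D_0}|R_d(q)|\ge 22$ if $q_0\ne 2$ and $\ge 21$ if $q_0=2$, and all these primes are distinct from one another and from $\pi(q(q^2-1))$; combined with (a)--(b) this yields $|\pi_1(L)|\ge|\pi(q(q^2-1))|+21\ge|\pi(q(q^2-1))|+17$ and $\deg(t)\ge|\pi(q(q^2-1))|+20\ge|\pi(q(q^2-1))|+16$.

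I expect no genuine obstacle here: the whole argument is a careful reading of the known prime graph of $E_8(q)$. The only places that demand attention are the case‑check of Lemma~\ref{graphE8} for pairs one of whose $e$-values is at most $2$ (to confirm that no nonadjacency clause applies), and, in part (2), keeping track of the three Bang--Zsigmondy exceptional pairs $(2,1)$, $(3,1)$, $(2,6)$ when passing from $q_0$ to $q_0^r$.
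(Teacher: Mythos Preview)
Your argument is correct and follows the same line as the paper: read $\pi_1(L)$ off from Table~1 and Lemma~\ref{graphE8}, then invoke Bang--Zsigmondy (Lemma~\ref{zsigm}) to produce enough primitive prime divisors in $\pi_1(L)\setminus\pi(q(q^2-1))$, all adjacent to $\pi(q(q^2-1))$. For part~(1) your expansion is exactly what the paper means by ``follows directly.'' For part~(2) there is a small bookkeeping difference: the paper works with the index set $\mathfrak{I}=\{3,4,5,7,8,9,10,12,14,18,3r,4r,6r,8r,9r,12r,18r\}$ (seventeen indices, with $6$ replaced by $6r$ to sidestep the $(2,6)$ Zsigmondy exception) and picks one prime per index, hitting the target~$17$ on the nose; you instead take $D_0\cup rD_0$ and obtain $21$ or $22$ primes, a stronger bound than needed. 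Both routes are the same idea, and your handling of the Zsigmondy exceptional pairs is accurate.
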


\begin{proof} Statement~$(1)$ follows directly from Lemmas~\ref{graphE8} and~\ref{zsigm}.

If $r\ge 11$ is a prime, then the set $$\mathfrak{I}=\{3, 4, 5, 7, 8, 9, 10, 12, 14, 18, 3r, 4r, 6r, 8r, 9r, 12r, 18r\}$$ consists of pairwise distinct numbers. Moreover, $|\pi(q\cdot (q^2-1))| \cap R_i(q_0)=\varnothing$ for $i \in \mathfrak{I}$. Now statement~$(3)$ follows directly from Lemmas~\ref{graphE8} and~\ref{zsigm}.

\end{proof}

\section{Proof of Main theorem for the group ${^2}E_6(2)$}

Let $L={^2}E_6(2)$. By~\cite{Atlas}, $\Gamma(L)$ is as in Figure~2.

\begin{center}    \begin{tikzpicture}
        \tikzstyle{every node}=[draw,circle,fill=white,minimum size=4pt, inner sep=0pt]
        \draw (0,0) node (3) [label=below:$3$] {}
        (-3.2cm:-1.7cm) node (2) [label=above:$2$] {}
        (0.0cm:-2.0cm) node (5) [label=left:$5$] {}
        (-1.4cm:-2.65cm) node (7) [label=left:$7$] {}
        (1.0cm:1.5cm) node (11) [label=below:$11$] {}

        (6.0cm:4.0cm) node (17) [label=left:$17$] {}
        (0.0cm:-4.0cm) node (19) [label=left:$19$] {}
        (-7.0cm:4.2cm) node (13) [label=left:$13$] {}
 (3)--(11)
 (2)--(11)
 (2)--(3)
 (2)--(5)
 (2)--(7)
 (3)--(5)
 (3)--(7)
 (5)--(7)
 ;
    \end{tikzpicture}\\
    Figure~2
    \end{center}

Let $G$ be a group such that $\Gamma(G) \cong \Gamma(L)$. Then $|\pi(G)|=8$, $s(G)=4$ and $t(G)=5$.
By Lemmas~\ref{Gruenberg--Kegel theorem} and~\ref{Structure}, $G$ has a normal nilpotent subgroup $N$  such that $G/N$ is an almost simple group with simple non-abelian socle $S$ such that $s(S)\ge 4$.

Suppose for the contradiction that $S \not \cong {^2}E_6(q)$. Note that $|\pi(S)|\le 8$. By Lemma~\ref{Table}, $S$ is contained in the following list{\rm:} $PSL_3(4)$, $^2B_2(q)$ for $q=2^{2m+1}>2$,  $Ly$, $M_{22}$, $J_1$, $O'N$, $E_8(q)$ for some $q$.

Let $S\cong PSL_3(4)$. Then by~\cite{Atlas}, $\Gamma(S)$ consists of the following singleton connected components $\{2\}$, $\{3\}$, $\{5\}$, and $\{7\}$. Since $|Out(S)|=12$ and $|\pi(G)|=8$, there are four pairwise distinct primes $p$, $q$, $r$, and $t$ such that $\{p, q, r, t\}\subseteq \pi(N)\setminus \pi(G/N)$. Since $N$ is nilpotent, the primes $p$, $q$, $r$, and $t$ are pairwise adjacent in $\Gamma(N)$, therefore, they are pairwise adjacent in $\Gamma(G)$. By Lemma~\ref{vas}, vertex $2$ is adjacent to each of these primes. Thus, $\Gamma(G)$ has a $5$-clique; a contradiction.

Let $S \cong {^2B_2(q)}$ for $q=2^{2m+1}>2$. By Lemma~\ref{Table}, $\Gamma(S)$ has four connected components{\rm:} $\{2\}$, $\pi(q-1)$, $\pi(q-\sqrt{2q}+1)$, and $\pi(q+\sqrt{2q}+1)$. By Lemma~\ref{Structure}, for $i\in \{2, 3, 4\}$, $|\pi_i(S)|=1$, therefore, $|\pi(S)|=4$. Thus, by \cite[Theorem~1]{Bugeaud}, $q \in \{2^3, 2^5\}$. By~\cite{Atlas}, $|Out({^2}B_2(2^3))|=3$ and $|Out({^2}B_2(2^5))|=5$. If $G/N \cong Aut({^2}B_2(2^3))$, then by~\cite{Atlas}, $\Gamma(G/N)$ has three connected components $\{2, 3, 5\}$, $\{7\}$, and $\{13\}$ and because $\pi(N)\subseteq \pi_1(G)$ by Lemma~\ref{Gruenberg--Kegel theorem}, $\Gamma(G)$ has at most three connected components; a contradiction. Thus, $S\cong {^2}B_2(2^5)$ or $G/N\cong {^2}B_2(2^3)$. Again $|\pi(G/N)|=4$ and $|\pi(G)|=8$, therefore, there are four pairwise distinct primes $p$, $q$, $r$, and $t$ such that $\{p, q, r, t\}\subseteq \pi(N) \setminus \pi(G/N)$. Since $N$ is nilpotent, the primes $p$, $q$, $r$, and $t$ are pairwise adjacent in $\Gamma(N)$, therefore, they are pairwise adjacent in $\Gamma(G)$. By Lemma~\ref{vas}, vertex $2$ is adjacent to each of these primes. Thus, $\Gamma(G)$ has a $5$-clique; a contradiction.

Let $S \cong Ly$. By~\cite{Atlas}, $|\pi(S)|=8$ and $\Gamma(S)$ is as in Figure~3.

\begin{center}    \begin{tikzpicture}
        \tikzstyle{every node}=[draw,circle,fill=white,minimum size=4pt, inner sep=0pt]
        \draw (0,0) node (3) [label=below:$3$] {}
        (-3.2cm:-1.7cm) node (2) [label=above:$2$] {}
        (0.0cm:-2.0cm) node (5) [label=left:$5$] {}
        (-1.4cm:-2.65cm) node (7) [label=left:$7$] {}
        (1.0cm:1.5cm) node (11) [label=below:$11$] {}

        (6.0cm:4.0cm) node (31) [label=left:$31$] {}
        (0.0cm:-4.0cm) node (37) [label=left:$37$] {}
        (-7.0cm:4.2cm) node (67) [label=left:$67$] {}
 (3)--(11)
 (2)--(11)
 (2)--(3)
 (2)--(5)
 (2)--(7)
 (3)--(5)
 (3)--(7)
 ;
    \end{tikzpicture}\\
    Figure~3
    \end{center}
$\Gamma(L)$ has eight edges while $\Gamma(S)$ has seven edges. The vertices $2$ and $3$ are adjacent to all the vertices in $\pi_1(S)$, therefore, in $\Gamma(G)$ there is an edge between $5$ and $7$, $5$ and $11$ or $7$ and $11$. By~\cite{Atlas}, $|Out(S)|=1$. Thus, by Lemma~\ref{NormalSeriesAdj}, $O_p(G)\not =1$ for some $p \in \{5, 7, 11\}$. By~\cite{Atlas}, $S$ has a maximal subgroup $M\cong G_2(5)$ and $\pi(M)=\{2, 3, 5, 7, 31\}$. If $p=5$, then by Lemma~\ref{Unisingular}, the vertices $5$ and $31$ are adjacent in $\Gamma(G)$, therefore, $\Gamma(G)$ has at most three connected components; a contradiction. If $p\in \{7, 11\}$, then by Lemmas~\ref{Eigenvector1} and~\ref{TiepZalThm}, the vertices $p$ and $31$ are adjacent in $\Gamma(G)$, therefore, again $\Gamma(G)$ has at most three connected components; a contradiction.

Let $S \cong M_{22}$. By~\cite{Atlas}, $|\pi(S)|=5$, $\Gamma(S)$ has four connected components $\{2, 3\}$, $\{5\}$, $\{7\}$, and $\{11\}$, and $|Out(S)|=2$. Thus, there are three pairwise distinct primes $p$, $q$, and $r$ such that $\{p, q, r\} \subseteq \pi(N) \setminus \pi(G/N)$. By Lemma~\ref{vas}, vertex $2$ is adjacent in $\Gamma(G)$ to each odd prime from $\pi(N)$. If $3$ divides $|N|$, then $\{2, 3, p, q, r\}$ is a $5$-clique in $\Gamma(G)$; a contradiction. Thus, we can assume that $3$ does not divide $|N|$. By~\cite{Atlas}, $S$ has a maximal subgroup $M \cong PSL_2(11)$ and $3$ divides $|M|$. Then by Lemma~\ref{PSL2qODD}, vertex $3$ is adjacent in $\Gamma(G)$ to each prime from the set $\{p, q, r\}$. Thus, again $\{2, 3, p, q, r\}$ is a $5$-clique in $\Gamma(G)$; a contradiction.

Let $S \cong J_1$. By~\cite{Atlas}, $|Out(S)|=1$ and $\Gamma(S)$ consists of four cliques $\{2, 3, 5\}$, $\{7\}$, $\{11\}$, $\{19\}$. Thus, there are two distinct primes $p$ and $q$ such that $\{p, q\} \subseteq \pi(N) \setminus \pi(G/N)$. By Lemma~\ref{vas}, vertex $2$ is adjacent to each odd prime from $\pi(N)$. By~\cite{Atlas}, $S$ has a maximal subgroup $M \cong PSL_2(11)$. By Lemma~\ref{PSL2qODD}, vertex $3$ is also adjacent both to $p$ and $q$. If $11 \in \pi(N)$, then $\Gamma(G)$ has at most three connected components; a contradiction. Thus, $11$ does not divide $|N|$. By~\cite{Atlas}, $S$ has a maximal subgroup $T \cong 11:5$ which is a Frobenius group with a cyclic complement of order $5$. By Lemma~\ref{MazurovLemma}, $11 \in \pi_1(G)$ or $\{2, 3, 5, p, q\}$ is a $5$-clique in $\Gamma(G)$; a contradiction.

Let $S\cong O'N$. By~\cite{Atlas}, $|Out(S)|=2$. Because $|O'N|=7$, there exists a prime $p \in \pi(N)\setminus \pi(G/N)$. From Lemma~\ref{BrChar} and the ordinary character table for $S$ (see, for example,~\cite{Atlas}) it follows that in every faithful irreducible module of $S$ in characteristic $p$ every element of order $13$, $17$ or $19$ fixes a non-zero vector (see also~\cite[4.3]{LeePopiel_2}). Thus, $\Gamma(G)$ has at most three connected components; a contradiction.

Let $S \cong E_8(q)$ for some $q$. Then by Lemmas~\ref{OrdersAndGraphs} and~\ref{Primes20}, $|S|>11$; a contradiction.

\smallskip

Thus, we proved that $S\cong L$. This implies that $\Gamma(S)=\Gamma(L)=\Gamma(G)$, therefore, $G \cong L$ by \cite[Theorem]{Kondrat'ev2E6(2)}. \hfill $\Box$

\section{Proof of Main theorem for the groups $E_8(q)$, where $q\in \{4,5,9\}$}

Let $L=E_8(q)$, where $q \in \{4,5,9\}$, and let $G$ be a group such that $\Gamma(G)\cong \Gamma(L)$. By~\cite[Theorem~1]{Zavarnitsine_2013}, $G\cong E_8(q_1)$, where $q_1 \equiv 0, \pm 1 \pmod{4}$. By Lemma~\ref{graphE8}, a vertex $x$ is adjacent to each vertex in $\pi_1(G)$ if and only if $x\in \pi(q_1\cdot (q_1^2-1))$; a vertex $y$ is adjacent to each vertex in $\pi_1(L)$ if and only if $y\in \pi(q\cdot (q^2-1))$. Thus, $|\pi(q_1^2-1)|=|\pi(q^2-1)|=2$. By Lemma~\ref{PrimesSmall}, $q_1\in \{4, 5, 9\}$. Because by Lemma~\ref{OrdersAndGraphs}, the numbers $|\pi(E_8(4))|=26$, $|\pi(E_8(5))|=24$, and $|\pi(E_8(9))|=29$ are pairwise distinct, we have $G\cong L$.

\section{Proof of Main theorem for the group $E_8(q)$, where $q\in \{3,7,8, 17\}$}

Let $L=E_8(q)$, where $q\in \{3, 7, 8, 17\}$, let $G$ be a group such that $\Gamma(G) \cong \Gamma(L)$, and let $$\Psi: \Gamma(L) \rightarrow \Gamma(G) \mbox{ be a graph isomorphism}.$$ By Lemmas~\ref{OrdersAndGraphs}, \ref{Gruenberg--Kegel theorem} and~\ref{Structure}, $G$ has a normal nilpotent subgroup $N$  such that $G/N$ is an almost simple group with simple non-abelian socle $S$ such that $s(S)\ge 4$. Define by $\overline{\phantom{x}}: G \rightarrow N$ the natural epimorphism from $G$ to $G/N$. By Lemma~\ref{Table}, choices for $S$ are listed in Table~$1$. Lemmas~\ref{Structure} and~\ref{OrdersAndGraphs} imply that one of the following statements hold{\rm:}

$(i)$ $S\cong {^2}B_2(q_1)$, where $q_1=2^{2m+1}\ge 8${\rm;}

$(ii)$ $S \cong E_8(q_1)$ for some prime power $q_1$.

\medskip

Assume that statement $(i)$ holds. By Table~$1$, $\Gamma(S)$ has four connected components{\rm:} $$\{2\}, \mbox{   }\mbox{   } \pi(q_1-1), \mbox{   }\mbox{   } \pi(q_1-\sqrt{2q_1}+1), \mbox{   }\mbox{   } \mbox{ and } \mbox{   }\mbox{   }\pi(q_1+\sqrt{2q_1}+1).$$
By Lemma~\ref{Structure}, $\Psi(\pi_1(L)) = \pi_1(G)$. Note that $\pi_1(S)=\{2\}$, therefore, $$\pi_1(G)=\pi(N) \cup \{2\} \cup \pi(\overline{G}/S).$$ By Lemma~\ref{vas}, the vertex $2$ is adjacent to each odd prime from $\pi(N) \cup \pi(\overline{G}/S)$. Thus, in $\Gamma(G)$ the vertex $2$ has degree $|\pi_1(G)|-1$; a contradiction to Lemma~\ref{OrdersAndGraphs}.

\medskip

Assume that statement $(ii)$ holds. Let $R=\pi(q_1\cdot (q_1^2-1))$.

\smallskip

Let $q=3$. By Lemmas~\ref{Structure} and~\ref{Primes20}, we have $|R|\le 3$. Lemmas~\ref{PrimesSmall} and~\ref{OrdersAndGraphs} imply that $q_1 \in \{2, 3\}$. If $q_1=2$, then $|\pi(N)|=3$, therefore, by Lemma~\ref{E8nontrivK_R24}, $s(G) \le 3$; a contradiction. Thus, $q_1=3$ and $\Gamma(G)=\Gamma(L)$. By~\cite[Theorem~6.1]{MasPansStar}, $G \cong E_8(q_0)$ for some prime power $q_0$, and we immediately conclude that $G\cong L$.

\smallskip

Let $q=7$. If $|R|\ge 4$, then by Lemma~\ref{Primes20}, in $\Gamma(S)$ each vertex from $R$ has degree at least $|R|+10\ge 14$. Thus, in $\pi_1(S)$ there are at least $4$ vertices of degree at least $14$; a contradiction to Lemma~\ref{OrdersAndGraphs}. Thus, $|R| \le 3$ and $|S| \le |E_8(7)|$, therefore, $q_1 \in \{2, 3, 4, 5, 7\}$ by Lemma~\ref{PrimesSmall}. But by Lemma~\ref{OrdersAndGraphs}, $\Gamma(E_8(q))$ for $q \in \{2, 3, 4, 5\}$ do not contain a connected component of order $3$, therefore,  by Lemma~\ref{Structure}, $S\cong E_8(7)$ and $\Gamma(G)=\Gamma(E_8(7))$. By~\cite[Theorem~6.1]{MasPansStar}, $G\cong E_8(u)$ with $u \equiv \pm 2\pmod{8}$ and $|\pi(u^2-1)| \le 2$. Thus, $u \in \{2, 3, 7, 17\}$. Now Lemma~\ref{OrdersAndGraphs} implies that $G \cong E_8(7)$.

\smallskip

Let $q \in \{8, 17\}$. Suppose that $N \not = 1$. By Lemma~\ref{E8nontrivK_R24} we have $4=s(G) \le s(S)-1$, therefore, we conclude that $s(S)=5$ and from Lemma~\ref{Table} we conclude that $q_1\equiv 0, \pm 1 \pmod{5}$. Moreover, by Lemma~\ref{E8nontrivK_R24} we have $$\pi_1(G)=\pi_1(S) \cup \pi(N) \cup \pi(\overline{G}/S) \cup R_{24}(q_1).$$ By Lemma~\ref{graphE8}, each vertex from $R$ is adjacent to each other vertex from $\pi_1(S)$.

Suppose that $|R|\ge 4$. By Lemma~\ref{Primes20}, $\overline{G}/S$ is a $\{2, 3, 5, 7\}$-group or $|\pi_1(S)| \ge 21$. In the latter case, $\Gamma(G)$ contains at least $4$ vertices of degree at least $20$; a contradiction to Lemma~\ref{OrdersAndGraphs}. Thus, since $\{2, 3, 5, 7\} \subseteq \pi_1(S)$ by Fermat's little theorem we have  $$\pi_1(G)=\pi_1(S) \cup \pi(N) \cup R_{24}(q_1).$$
Moreover, by~\cite[Table~5.1]{LiSaSe92}, $S$ has a subgroup isomorphic to ${^3}D_4(q)\times {^3}D_4(q)$. By~\cite[Theorem~10.3.1]{Gorenstein}, each Sylow $r$-subgroup of $S$ for $r \in R$ can not act fixed-point-freely on any group of order coprime to $r$, therefore, each vertex from $R$ is adjacent to each vertex from $\pi_1(S) \cup \pi(N)$. Thus, $\Gamma(G)$ has at least $4$ vertices of degree at least $|\pi_1(S) \cup \pi(N)|-1$. Therefore, if $q=8$, then $|\pi_1(S) \cup \pi(N)|\le 18$, and if $q=17$, then $|\pi_1(S) \cup \pi(N)|\le 16$. By Lemma~\ref{Structure}, in both cases we have $|\pi_1(G) \setminus (\pi_1(S) \cup \pi(N))| \ge 6$ and $\pi_1(G) \setminus (\pi_1(S) \cup \pi(N)) \subseteq R_{24}(q_1)$. Remind that $R_{24}(q_1)$ is a clique in $\Gamma(S)$ by Lemma~\ref{Table}. Thus, there are at least $6$ vertices in $\pi_1(G) \setminus (\pi_1(S) \cup \pi(N))$, and each of these vertices has degree at least $5$ in $\Gamma(S)$. Moreover, if $|R|\ge 4$, then by Lemma~\ref{OrdersAndGraphs}, there are at least $13$ vertices in $\pi_1(S)$ such that each of these vertices has degree at least $5$ in $\Gamma(S)$. Thus, $\Gamma(G)$ has at least $19$ vertices of degree at least $5$; a contradiction to Lemma~\ref{OrdersAndGraphs}.

We conclude that $|R| \le 3$, therefore, $q_1 \in \{4, 5, 9\}$ by Lemma~\ref{PrimesSmall}. By Lemma~\ref{OrdersAndGraphs}, $\Gamma(E_8(9))$ does not contain a connected component of order $2$ while $\Gamma(L)$ has such a connected component; a contradiction to Lemma~\ref{Structure}. Thus, $q_1\in \{4, 5\}$ and $\pi(G)=\pi(N) \cup \pi(S)$. By Lemma~\ref{OrdersAndGraphs}, if $q=5$, then $|\pi(S)|=24$ and $|\pi_1(S)|=17$; if $q=4$, then $|\pi(S)|=26$ and $|\pi_1(S)|=19$. By Lemma~\ref{vas}, vertex $2$ is adjacent to each odd vertex from $\pi(N)$, therefore, in $\Gamma(G)$  vertex $2$ has degree at least $21$ if $q_1=8$ and has degree at least $20$ if $q_1=17$; a contradiction to Lemma~\ref{OrdersAndGraphs}.

\smallskip

Thus, we have proved that $N=1$. Again suppose that $|R|\ge 4$. By Lemma~\ref{Primes20}, $\overline{G}/S$ is a $\{2, 3, 5, 7\}$-group or $\Gamma(G)$ contains at least $4$ vertices of degree at least $20$; in the latter case we obtain immediately a contradiction with Lemma~\ref{OrdersAndGraphs}. Note that $\{2, 3, 5, 7\} \subseteq \pi_1(S)$ by Fermat's little theorem. Thus, one of the following statements holds{\rm:}

\smallskip

$(a)$ $s(S)=4$, $\pi_1(S)=\pi_1(G)$, and if $r \in \pi_1(G)$ and $2$ and $r$ are non-adjacent in $\Gamma(G)$, then $r \in R_{20}(q_1)${\rm;}

\smallskip

$(b)$ $s(S)=5$ and there exists $i \in \{15, 20, 24, 30\}$ such that $\pi_1(G)=\pi_1(S) \cup R_i(q_1)$, moreover, if $r \in \pi_1(G)$ and $2$ and $r$ are non-adjacent in $\Gamma(G)$, then $r \in R_i(q_1)$.

\smallskip

Thus, we see that all the vertices in $\pi_1(G)$ which are non-adjacent to $2$ in $\Gamma(G)$ form a clique $Q$. Moreover, each vertex from $R$ is adjacent to each vertex from $\pi_1(G)\setminus Q$, therefore, $\Gamma(G)$ has at least $4$ vertices of degree at least $|\pi_1(G)\setminus Q|-1$.
Thus, if $q=8$, then $|\pi_1(G)\setminus Q|\le 18$, and if $q=17$, then $|\pi_1(G)\setminus Q|\le 16$. By Lemma~\ref{Structure}, in both cases we have $|Q| \ge 6$, therefore, there are at least $6$ vertices in $Q$, and each of these vertices has degree at least $5$ in $\Gamma(S)$. Moreover, if $|R|\ge 4$, then by Lemma~\ref{OrdersAndGraphs}, there are at least $13$ vertices in $\pi_1(S)$ such that each of these vertices has degree at least $5$ in $\Gamma(S)$. Thus, $\Gamma(G)$ has at least $19$ vertices of degree at least $5$; a contradiction to Lemma~\ref{OrdersAndGraphs}.

We again conclude that $|R| \le 3$, therefore, $q_1 \in \{2, 3, 4, 5, 7, 8, 9, 17\}$ by Lemma~\ref{PrimesSmall} and $|\pi(G)|=|\pi(S)|$. Now Lemma~\ref{OrdersAndGraphs} implies that if $q_1\not = q_2$ and $q_1, q_2 \in \{2, 3, 4, 5, 7, 8, 9, 17\}$, then $\{q_1,q_2\}=\{8,9\}$. But the graph $\Gamma(E_8(8))$ has connected components of orders $24$, $2$, $2$, and $2$ while $\Gamma(E_8(9))$ has connected components of orders $21$, $3$, $3$, $1$, $1$. Using Lemma~\ref{Structure} and \cite[Theorem~6.1]{MasPansStar} we conclude that $G \cong L$.

\section{Acknowledgements}

We dedicate this paper to Prof. Cheryl E. Praeger, a great female specialist in Group Theory and Combinatorics who gives us an inspiring example that women can be remarkable specialists in Mathematics and at the same time can have happy families and children.

\medskip

The first author is supported by the National Natural Science Foundation of China (project No.~12461063). The work of the second author was performed as part of research conducted in the Ural Mathematical Center with the financial support of the Ministry of Science and Higher Education of the Russian Federation (Agreement number 075-02-2025-1549). This research continues the project which was supported by the Russian Science Foundation (project no. 19-71-10067).


\begin{thebibliography}{99}


\bibitem{Asch86}
M.~Aschbacher, \textit{Finite Group Theory}, Cambridge Univ. Press, Cambridge, 1986.


\bibitem{ak}
O.~A.~Alekseeva and A.~S.~Kondrat'ev, On recognizability of some finite simple orthogonal groups by spectrum, \textit{Proc. Steklov Inst. Math.}, \textbf{266} (2009), 10--23.


\bibitem{Bang}
A.~S.~Bang, Taltheoretiske Unders\o{}gelser, \textit{Tidsskrift Math.}, \textbf{4} (1886), 70-–80.


\bibitem{Bugeaud}
Y.~Bugeaud, Z.~Cao, M.~Mignotte, On simple $K_4$-groups, \textit{J. Algebra}, \textbf{241}:2 (2001), 658--668.

\bibitem{CaMas}
P.~J.~Cameron, N.~V.~Maslova, Criterion of unrecognizability of a finite group by its Gruenberg-Kegel graph, \textit{J. Algebra}, {\bf 607}: Part~A (2022), 186--213.


%\bibitem{ChGMY}
%M.-Zh.~Chen, I.~B.~Gorshkov, N.~V.~Maslova, N.~Yang, On combinatorial properties of Gruenberg--Kegel graphs of finite groups, \textit{Monatshefte f$\ddot{u}$r Mathematik}, 205 (2024), 711--723.

\bibitem{Atlas} J.~H.~Conway, et. al., {\it Atlas of finite groups},  Clarendon Press, Oxford, 1985.


\bibitem{DoJaLu}
S.~Dolfi, E.~Jabara, M.~S.~Lucido, $C55$-groups, \textit{Siberian Math. J.}, \textbf{45}:6 (2004), 1053--1062.


\bibitem{Gorenstein}
D.~Gorenstein, \textit{Finite groups}, Harper and Row, N. Y., 1968.

\bibitem{GreMazShiVasYang}
M.~A.~Grechkoseeva, V.~D.~Mazurov, W.~Shi, A.~V.~Vasil'ev, N.~Yang, Finite groups isospectral to simple groups, \textit{Communications in Mathematics and Statistics}, \textbf{11} (2023), 169--194.

\bibitem{GuTi03}
R.~M.~Guralnick, P.~H.~Tiep, Finite simple unisingular groups of Lie type,
\textit{J. Group Theory}, \textbf{6}:3 (2003), 271--310.

\bibitem{Herzog}
M.~Herzog, On finite simple groups of order divisible by three primes only, \textit{J. Algebra}, \textbf{10}:3
(1968), 383--388.

\bibitem{KL}
P.~Kleidman, M.~Liebeck, \textit{The subgroup structure of the finite classical groups}, Cambridge University Press, Cambridge,  1990.


\bibitem{Kondrat'ev2E6(2)}
A.~S.~Kondrat'ev, Recognizability by prime graph of the group $^2E_6(2)$, \textit{J. Math. Sci. (New York)}, \textbf{259}:4 (2021), 463--466.

\bibitem{LeePopiel_2}
M.~Lee, T.~Popiel, Recognizability of the sporadic groups by the isomorphism types of their prime graphs, arXiv:2310.10113 [math.GR].

\bibitem{LiSaSe92}
M.~W.~Liebeck, J.~Saxl, G.~M.~Seitz,
{\it Subgroups of maximal rank in finite exceptional groups of Lie type}, Proc. London Math. Soc.{\rm(3)}, {\bf65} (1992), 297--325.


\bibitem{MaslovaPrGrSub}
N.~V.~Maslova, On the coincidence of Gruenberg–Kegel graphs of a finite simple group and its proper subgroup, \textit{Proc. Steklov Inst. Math.}, \textbf{288}:1 (2015), 129--141.


\bibitem{Maslova_surv}
N.~V.~Maslova, On arithmetical properties and arithmetical characterizations of finite groups, \textit{Proceedings of $(WM)^2$~--- World Meeting for Women in Mathematics~---$2022$}, to appear, arXiv:2401.04633 [math.GR].

\bibitem{Maslova_UWGTC2020}
N.~V.~Maslova, 2020 Ural Workshop on Group Theory and Combinatorics, \textit{Trudy Instituta Matematiki i Mekhaniki UrO RAN}, \textbf{27}:1 (2021), 273--282.


\bibitem{MasPansStar}
N.~V.~Maslova, V.~V.~Panshin, A.~M.~Staroletov, On characterization by Gruenberg--Kegel graph of finite simple exceptional groups of Lie type, \textit{European Journal of Mathematics}, \textbf{9} (2023), Article number: 78.


\bibitem{Mazurov}
V.~D.~Mazurov, Characterizations of finite groups by sets of orders of their elements, \textit{Algebra Logic}, \textbf{36}:1 (1997), 23--32.

\bibitem{Stewart}
W.~B.~Stewart, Groups having strongly self-centralizing $3$-centralizers, \textit{Proc. London Math. Soc.}, \textbf{426}:4 (1973), 653--680.

\bibitem{Thompson}
J.~Thompson, Finite groups with fixed-point-free automorphisms of prime order, \textit{Proceedings of the National Academy of Sciences of the United States of America}, \textbf{45}:4 (1959), 578--581.


\bibitem{TiepZal}
P.~H.~Tiep, A.~E.~Zalesski, Hall-Higman type theorems for exceptional groups of Lie type, I, \textit{J.~Algebra}, {\bf607} (2022), 755-794.

\bibitem{Va05}
A.~V.~Vasil$'$ev, On connection between the structure of finite group and properties of its prime graph, \textit{Siberian Math. J.}, {\bf46}:3 (2005), 396--404.

\bibitem{VaVd05} A.~V.~Vasil$'$ev, E.~P.~Vdovin, An adjacency criterion for the prime graph of a finite simple group,
\textit{Algebra Logic}, {\bf 44}:6 (2005), 381--406.

\bibitem{VaVd11}
A.~V.~Vasil$'$ev, E.~P.~Vdovin, Cocliques of maximal size in the prime graph of a finite simple group,
\textit{Algebra Logic}, {\bf 50}:4 (2011), 291--322.

\bibitem{Williams}
J.~S.~Williams, Prime graph components of finite groups, \textit{J. Algebra}, \textbf{69} (1981), 487--513.

\bibitem{Zavarnitsine_2006}
A.~V.~Zavarnitsine, Recognition of finite groups by the prime graph, \textit{Algebra and Logic}, \textbf{45} (2006), 220--231.

\bibitem{Zavarnitsine_2013}
A.~V.~Zavarnitsine, Finite groups with a five-component prime graph, \textit{Siberian Mathematical Journal}, \textbf{54} (2013), 40--46.


\bibitem{ZinMaz}
M.~R.~Zinov’eva, V.~D.~Mazurov, On finite groups with disconnected prime graph, \textit{Proc. Steklov Inst. Math.}, \textbf{283}:Suppl~1 (2013), 139--145.

\bibitem{Zs92}
K.~Zsigmondy,
{\it Zur Theorie der Potenzreste}, Monatsh. Math. Phys., {\bf3} (1892), 265--284.




\end{thebibliography}
\end{document}